      \theoremstyle{plain}
      \newtheorem{theorem}{Theorem}[section]
      \newtheorem{lemma}[theorem]{Lemma}
      \newtheorem{proposition}[theorem]{Proposition}
      \theoremstyle{definition}
      \newtheorem{definition}[theorem]{Definition}
      \theoremstyle{remark}
      \newtheorem{remark}[theorem]{Remark}
\newcommand{\vol}{\mathrm{Vol}}
\newcommand{\X}{\mathcal{X}}
      \def\@setcopyright{}
      \def\serieslogo@{}
\begin{document}

%



   \author{Sungwoon Kim}
   \address{School of Mathematics,
   KIAS, Hoegiro 85, Dongdaemun-gu,
   Seoul, 130-722, Republic of Korea}
   \email{sungwoon@kias.re.kr}






   \title[Volume of representations]{On the equivalence of the definitions of volume of representations}


\begin{abstract}
Let $G$ be a rank $1$ simple Lie group and $M$ be a connected, orientable, aspherical, tame manifold. Assume that each end of $M$ has amenable fundamental group. There are several definitions of volume of representations of $\pi_1(M)$ into $G$. We give a new definition of volume of representations and furthermore, show that all definitions so far are equivalent. 
\end{abstract}

\footnotetext[1]{2000 {\sl{Mathematics Subject Classification.}}
53C35, 53C24}

\footnotetext[2]{{\sl{Key words and phrases.}}
semisimple Lie group, volume of representations}

\footnotetext[3]{This research was supported by the Basic Science Research Program through the National Research Foundation of Korea (NRF) funded by the Ministry of Education, Science and Technology (NRF-2012R1A1A2040663)}


   \keywords{}

   \thanks{}
   \thanks{}

   \dedicatory{}

   \date{}


   \maketitle



\section{Introduction}

Let $G$ be a semisimple Lie group and $\X$ the associated symmetric space of dimension $n$. 
Let $M$ be a connected, orientable, aspherical, tame manifold of the same dimension as $\X$. First assume that $M$ is compact. To each representation $\rho :\pi_1(M) \rightarrow G$, one can associate a volume of $\rho$ in the following way. First, associate a flat bundle $E_\rho$ over $M$ with fiber $\X$ to $\rho$. Since $\X$ is contractible, there always exists a section $s : M \rightarrow E_\rho$. Let $\omega_{\X}$ be the Riemannian volume form on $\X$. One may think of $\omega_{\X}$ as a closed differential form on $E_\rho$ by spreading $\omega_{\X}$ over the fibers of $E_\rho$. Then the volume of $\rho$ is defined by 
$$\vol(\rho)=\int_M s^*\omega_{\X}.$$
Since any two sections are homotopic to each other, the volume $\vol(\rho)$ does not depend on the choice of section. 

The volume of representations has been used to characterize discrete faithful representations. 
Let $\Gamma$ be a uniform lattice in $G$. Then the volume of representations satisfies a Milnor-Wood type inequality. More precisely, it holds that for any representation $\rho :\Gamma\rightarrow G$, \begin{eqnarray}\label{MWinequality} |\vol(\rho)| \leq \vol(\Gamma\backslash \X).\end{eqnarray}
Furthermore, equality holds in (\ref{MWinequality}) if and only if $\rho$ is discrete and faithful. This is the so-called \emph{volume rigidity theorem}. Goldman \cite{Go92} proved the volume rigidity theorem in higher rank case and, Besson, Courtois and Gallot \cite{BCG07} proved the theorem in rank $1$ case.

Now assume that $M$ is noncompact. 
Then the definition of volume of representations as above is not valid anymore since some problems of integrability arise. So far, three definitions of volume of representations have been given under some conditions on $M$. Let us first fix the following notations throughout the paper.

\smallskip
\noindent {\bf Setup.} Let $M$ be a noncompact, connected, orientable, aspherical, tame manifold. Denote by $\overline M$ the compact manifold with boundary whose interior is homeomorphic to $M$. Assume that each connected component of $\partial \overline M$ has amenable fundamental group. Let $G$ be a rank $1$ semisimple Lie group with trivial center and no compact factors. Let $\X$ be the associated symmetric space of dimension $n$. Assume that $M$ has the same dimension as $\X$.

\smallskip

First of all, Dunfield \cite{Du99} introduced the notion of pseudo-developing map to define the volume of representations of a nonuniform lattice $\Gamma$ in $\mathrm{SO}(3,1)$. It was successful to make an invariant associated with a representation $\rho :\Gamma \rightarrow \mathrm{SO}(3,1)$ but he did not prove that the volume of representations does not depend on the chosen pseudo-developing map.
After that, Francaviglia \cite{Fr04} proved the well-definedness of the volume of representations. Then Francaviglia and Klaff \cite{FK06} extended the definition of volume of representations and the volume rigidity theorem to general nonuniform hyperbolic lattices. 
We call the definition of volume of representations via pseudo-developing map {\bf D1}. For more detail about {\bf D1}, see \cite{FK06} or Section \ref{sec:pseudo}.

The second definition {\bf D2} of volume of representations was given by Bucher, Burger and Iozzi \cite{BBI}, which generalizes the one introduced in \cite{BIW10} for noncompact surfaces. They used the bounded cohomology theory to make an invariant associated with a representation. 
Given a representation $\rho : \pi_1(M) \rightarrow G$, one can not get any information from the pull-back map in degree $n$ in continuous cohomology, $\rho^*_c : H^n_c(G,\mathbb R) \rightarrow H^n(\pi_1(M),\mathbb R)$, since $H^n(\pi_1(M),\mathbb R) \cong H^n(M,\mathbb R)$ is trivial. However the situation is different in continuous bounded cohomology. Not only may be the pull-back map $\rho^*_b : H^n_{c,b}(G,\mathbb R) \rightarrow H^n_b(\pi_1(M),\mathbb R)$ nontrivial but also encodes subtle algebraic and topological properties of a representation such as injectivity and discreteness. 
Bucher, Burger and Iozzi \cite{BBI} gave a proof of the volume rigidity theorem for representations of hyperbolic lattices from the point of view of bounded cohomology. We refer the reader to \cite{BBI} or Section \ref{sec:bounded} for further discussion about {\bf D2}.


Recently, S. Kim and I. Kim \cite{KK14} give a new definition, called {\bf D3}, of volume of representations in the case that $M$ is a complete Riemannian manifold with finite Lipschitz simplicial volume. See \cite{KK14} or Section \ref{sec:lipschitz} for the exact definition of {\bf D3}. In {\bf D3}, it is not necessary that each connected component of $\partial \overline M$ has amenable fundamental group while the amenable condition on $\partial \overline M$ is necessary in {\bf D2}. They only use the bounded cohomology and $\ell^1$-homology of $M$. It is quite useful to define the volume of representations in the case that the amenable condition on $\partial \overline M$ does not hold. They give a proof of the volume rigidity theorem for representations of lattices in an arbitrary semisimple Lie group in their setting.

In this note, we will give another definition of volume of representations, called {\bf D4}. In {\bf D4}, $\rho$-equivariant maps are involved as {\bf D1} and the bounded cohomology of $M$ is involved as {\bf D2} and {\bf D3}. In fact, {\bf D4} seems a kind of definition connecting the other definitions {\bf D1}, {\bf D2} and {\bf D3}. Eventually we show that all definitions are equivalent.

\begin{theorem}\label{thm:main}
Let $G$ be a rank $1$ simple Lie group with trivial center and no compact factors. Let $M$ be a noncompact, connected, orientable, aspherical, tame manifold. Suppose that each end of $M$ has amenable fundamental group. Then all definitions {\bf D1}, {\bf D2} and {\bf D3} of volume of representations of $\pi_1(M)$ into $G$ are equivalent. Furthermore if $M$ admits a complete Riemannian metric with finite Lipschitz simplicial volume, all definitions {\bf D1}, {\bf D2}, {\bf D3} and {\bf D4} are equivalent.
\end{theorem}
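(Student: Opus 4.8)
The plan is to treat \textbf{D4} as the hub and to match each of \textbf{D1}, \textbf{D2}, \textbf{D3} against it. The common thread is the bounded volume class: assigning to an ordered $(n+1)$-tuple of points of $\X$ the signed volume of the geodesic simplex it spans defines a $G$-invariant, bounded, alternating cocycle $\vol_{\X}$ on $\X$, representing a class $\omega^b_{\X}\in H^n_{c,b}(G,\R)$ which maps to the ordinary volume class under the comparison map. For any continuous $\rho$-equivariant map $f\colon\widetilde M\to\X$, the pullback $f^*\vol_{\X}$ is a $\pi_1(M)$-invariant bounded cocycle on $\widetilde M$, hence descends to a bounded cocycle on $M$; its class in $H^n_b(M,\R)$ is independent of $f$ and, under the canonical isometric isomorphism $H^*_b(M,\R)\cong H^*_b(\pi_1 M,\R)$, coincides with $\rho^*_b\omega^b_{\X}$. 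This class, evaluated against a fundamental class of $M$, is what underlies \textbf{D4}.

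First I would assemble the homological bookkeeping used throughout. The hypothesis that each end of $M$ has amenable fundamental group yields, via the vanishing of bounded cohomology of amenable groups together with the mapping-cone sequence, isometric isomorphisms $H^n_b(M,\R)\cong H^n_b(\pi_1 M,\R)\cong H^n_b(\overline M,\partial\overline M;\R)$. Dually one has the relative fundamental class $[\overline M,\partial\overline M]\in H_n(\overline M,\partial\overline M;\R)$ and, when $M$ carries a complete Riemannian metric of finite Lipschitz simplicial volume, the Lipschitz fundamental class in the $\ell^1$-homology of $M$; the pairings of bounded cohomology classes against these fundamental classes agree because the natural maps relating the homology theories are dual to the ones relating the cohomology theories. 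This step is bookkeeping built from \cite{BBI}, \cite{KK14} and the general theory of bounded cohomology. With it in place, \textbf{D2} $=$ \textbf{D4} and \textbf{D3} $=$ \textbf{D4} become essentially formal: \textbf{D2} is the evaluation of $\rho^*_b\omega^b_{\X}$, transported to $H^n_b(\overline M,\partial\overline M)$ by the amenable-boundary isomorphism, against $[\overline M,\partial\overline M]$; \textbf{D3} is the evaluation of $\rho^*_b\omega^b_{\X}$ against the Lipschitz $\ell^1$-fundamental class; and \textbf{D4} is the evaluation of $[f^*\vol_{\X}]\in H^n_b(M)$ against a fundamental class. Since the first paragraph identifies $[f^*\vol_{\X}]$ with $\rho^*_b\omega^b_{\X}$ and the present paragraph shows the relevant pairings coincide, the three numbers agree (one also reads off \textbf{D1} $=$ \textbf{D2} directly once \textbf{D1} $=$ \textbf{D4} is known).

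The substantial part, and the expected main obstacle, is the comparison with \textbf{D1}, which is defined analytically: one chooses a pseudo-developing map $D\colon\widetilde M\to\X$ — a $\rho$-equivariant map that is suitably tame at the ends, so that the $\pi_1(M)$-invariant form $D^*\omega_{\X}$ descends to an integrable form on $M$ — and sets $\vol(\rho)=\int_M D^*\omega_{\X}$, independence of $D$ being the theorem of \cite{Fr04, FK06}. I would argue: (i) a pseudo-developing map is in particular an admissible $f$ for \textbf{D4}, so the descended cocycle $D^*\vol_{\X}$ computes the \textbf{D4} class; (ii) on a compact core $M_0$ obtained by truncating the ends, Stokes' theorem gives $\int_{M_0}D^*\omega_{\X}=\langle[D^*\omega_{\X}],[M_0,\partial M_0]\rangle$, and the straightening operator lets one replace the de Rham class of $D^*\omega_{\X}$ by the singular class of $D^*\vol_{\X}$ without altering the pairing with $[M_0,\partial M_0]$; (iii) exhausting $M$ by such cores, the truncation errors are pairings of a fixed bounded class with chains supported on the end cross-sections, and these vanish in the limit because the end subgroups are amenable — matching the convergence of the improper integral, which comes from the boundedness of $D^*\omega_{\X}$ near the ends. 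The delicate point is (iii): controlling the truncation errors through amenability of the ends and reconciling this with the analytic convergence of the integral; here the hypotheses on the ends are essential, and the required estimates follow the pattern of \cite{FK06} and \cite{BBI}. Combining (i)--(iii) yields \textbf{D1} $=$ \textbf{D4}, and together with the preceding step this shows that all four definitions coincide wherever they are defined.
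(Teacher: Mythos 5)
Your overall architecture (use \textbf{D4} as the hub) matches the paper, and your \textbf{D2}$=$\textbf{D4} step is essentially the paper's Proposition \ref{prop:1}. But there is a genuine gap in your treatment of \textbf{D3}. You describe \textbf{D3} as ``the evaluation of $\rho^*_b\omega^b_{\X}$ against the Lipschitz $\ell^1$-fundamental class,'' i.e.\ as a pairing with the one distinguished class $[\Theta]_{c,b}$. That is not the definition: $\vol_3(\rho)$ is an \emph{infimum} over all continuous bounded classes $\omega_b\in H^n_{c,b}(G,\R)$ with $c(\omega_b)=\omega_{\X}$ (and all Lipschitz fundamental cycles). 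Since the comparison map $H^n_{c,b}(G,\R)\to H^n_c(G,\R)$ is not known to be injective in this generality, different bounded lifts of $\omega_{\X}$ could a priori give different pairings, and the formal argument you give only yields $\vol_3(\rho)\le\vol_2(\rho)$. The real content of the \textbf{D3}$=$\textbf{D4} step in the paper is exactly to remove this dependence: one extends an arbitrary bounded alternating $G$-invariant cocycle $f_b$ representing such an $\omega_b$ to a cocycle $\bar f_b$ on $\overline{\X}^{\,n+1}$ that is uniformly continuous on $\X^n\times\{\xi\}$ (Lemma \ref{lem:extend}, using the rank $1$ geometry), and then shows $\langle \widehat D^*[\bar f_b],[\widehat M]\rangle$ is independent of the choice of $\omega_b$ (Proposition \ref{lem:indepwb}) by exhausting $M$ by compact cores, coning off the boundary cross-sections to the ideal points, and using that a cone-map pseudo-developing map shrinks the cross-sections exponentially so that the error terms $\langle\beta, D_*(\widetilde{\partial c_k})\rangle$ and the cone contributions tend to $0$. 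Your proposal contains no substitute for this argument, so the equality \textbf{D3}$=$\textbf{D2} (and Proposition \ref{prop:indepwb}) is not established.

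A secondary inaccuracy: in your step (iii) for \textbf{D1}$=$\textbf{D4} you attribute the vanishing of the truncation errors to amenability of the end groups. Amenability plays no role there; it is used only to get the isometric isomorphism $H^*_b(\overline M,\partial\overline M,\R)\cong H^*_b(\overline M,\R)$ entering \textbf{D2}. The convergence of $\int_M D^*\omega_{\X}$ and the control of the ends in the \textbf{D1} comparison come from the cone-map structure of the pseudo-developing map (geodesic cones over the end cross-sections have volume bounded by their $(n-1)$-volume, and the straightened map agrees with $\widehat D$ at the ideal points, so Dunfield's homotopy/Stokes argument applies). As written, your estimate in (iii) rests on the wrong hypothesis, though the statement it aims at is correct and is proved in the paper via straightening relative to a triangulation of $\widehat M$ (Proposition \ref{prop:14equi}).
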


The paper is organized as follows: For our proof, we recall the definitions of volume of representations in the order {\bf D2, D4, D1, D3}.
In Section \ref{sec:bounded}, we first recall the definition {\bf D2}. In Section \ref{sec:cone}, we give the definition {\bf D4} and then prove that {\bf D2} and {\bf D4} are equivalent. In Section \ref{sec:pseudo}, after recalling the definition {\bf D1}, we show the equivalence of {\bf D1} and {\bf D4}. Finally in Section \ref{sec:lipschitz}, we complete the proof of Theorem \ref{thm:main} by proving that {\bf D3} and {\bf D4} are equivalent.

\section{Bounded cohomology and Definition {\bf D2}}\label{sec:bounded}


We choose the appropriate complexes for the continuous cohomology and continuous bounded cohomology of $G$ for our purpose.
Consider the complex $C^*_c(\mathcal X,\mathbb R)_\mathrm{alt}$ with the homogeneous coboundary operator, where  
$$C^k_c(\mathcal X,\mathbb R)_\mathrm{alt} =\{ f : \mathcal X^{k+1} \rightarrow \mathbb R \ | \ f \text{ is continuous and alternating} \}.$$
The action of $G$ on $C^k_c(\mathcal X,\mathbb R)_\mathrm{alt}$ is given by $$g\cdot f (x_0,\ldots,x_k)=f(g^{-1}x_0,\ldots,g^{-1}x_k).$$ Then the continuous cohomology $H^*_c(G,\mathbb R)$ can be isomorphically computed by the cohomology of the $G$-invariant complex $C^*_c(\mathcal X,\mathbb R)_\mathrm{alt}^G$ (see \cite[Chapitre III]{Gu80}). According to the Van Est isomorphism \cite[Proposition IX.5.5]{BW}, the continuous cohomology $H^*_c(G,\mathbb R)$ is isomorphic to the set of $G$-invariant differential forms on $\X$. Hence in degree $n$, $H^n_c(G,\mathbb R)$ is generated by the Riemannian volume form $\omega_{\mathcal X}$ on $\X$. 

Let $C^k_{c,b}(\X,\mathbb R)_\mathrm{alt}$ be subcomplex of continuous, alternating, bounded real valued functions on $\X^{k+1}$.
The continuous bounded cohomology $H^*_{c,b}(G,\mathbb R)$ is obtained by the cohomology of the $G$-invariant complex $C^*_{c,b}(\X,\mathbb R)_\mathrm{alt}^G$ (see \cite[Corollary 7.4.10]{Mo01}). The inclusion of complexes $C^*_{c,b}(\X,\mathbb R)^G_\mathrm{alt} \subset C^*_{c}(\X,\mathbb R)^G_\mathrm{alt}$ induces a comparison map $H^*_{c,b}(G,\mathbb R) \rightarrow H^*_{c}(G,\mathbb R)$. 

Let $Y$ be a countable CW-complex. Denote by $C^k_b(Y,\mathbb R)$ the complex of bounded real valued $k$-cochains on $Y$. 
For a subspace $B \subset Y$, let $C^k_b(Y,B,\mathbb R)$ be the subcomplex of those bounded $k$-cochains on $Y$ that vanish on simplices with image contained in $B$. The complexes $C^*_b(Y,\mathbb R)$ and $C^*_b(Y,B,\mathbb R)$ define the bounded cohomologies $H^*_b(Y,\mathbb R)$ and $H^*_b(Y,B,\mathbb R)$ respectively. For our convenience, we give another complex which computes the bounded cohomology $H^*_b(Y,\mathbb R)$ of $Y$. Let $C^k_b(\widetilde Y,\mathbb R)_\mathrm{alt}$ denote the complex of bounded, alternating real valued Borel functions on $(\widetilde Y)^{k+1}$. The $\pi_1(Y)$-action on $C^*_b(\widetilde Y,\mathbb R)_\mathrm{alt}$ is defined as the $G$-action on $C^*_c(\mathcal X,\mathbb R)$. Then Ivanov \cite{Iva85} proved that the $\pi_1(Y)$-invariant complex $C^*_b(\widetilde Y,\mathbb R)_\mathrm{alt}^{\pi_1(Y)}$ defines the bounded cohomology of $Y$.

Bucher, Burger and Iozzi \cite{BBI} used bounded cohomology to define the volume of representations.
Let $\overline M$ be a connected, orientable, compact manifold with boundary. Suppose that each component of $\partial \overline M$ has amenable fundamental group. In that case, it is proved in \cite{BBIPP,KK} that the natural inclusion $i:(\overline M,\emptyset) \rightarrow (\overline M,\partial \overline M)$ induces an isometric isomorphism in bounded cohomology, $$i_b^* : H^*_b(\overline M, \partial \overline M,\mathbb R) \rightarrow H^*_b(\overline M,\mathbb R),$$ in degrees $* \geq 2$. Noting the remarkable result of Gromov \cite[Section 3.1]{Gro82} that the natural map  $H^n_b(\pi_1(\overline M),\mathbb R)\rightarrow H^n_b(\overline M,\mathbb R)$ is an isometric isomorphism in bounded cohomology, for a given representation $\rho : \pi_1(M) \rightarrow G$ we have a map $$\rho^*_b : H^n_{c,b}(G,\mathbb R) \rightarrow H^n_b(\pi_1(\overline M),\mathbb R) \cong H^n_b(\overline M,\mathbb R) \cong H^n_b(\overline M,\partial \overline M,\mathbb R).$$

The $G$-invariant Riemannian volume form $\omega_\X$ on $\X$ gives rise to a continuous bounded cocycle $\Theta :\X^{n+1} \rightarrow \mathbb R$ defined by $$\Theta(x_0,\ldots,x_n)=\int_{[x_0,\ldots,x_n]}\omega_\X,$$ where $[x_0,\ldots,x_n]$ is the geodesic simplex with ordered vertices $x_0,\ldots,x_n$ in $\X$. The boundedness of $\Theta$ is due to the fact that the volume of geodesic simplices in $\X$ is uniformly bounded from above \cite{IY82}. Hence the cocycle $\Theta$ induces a continuous cohomology class $[\Theta]_c \in H^n_c(G,\mathbb R)$ and moreover, a continuous bounded cohomology class $[\Theta]_{c,b} \in H^n_{c,b}(G,\mathbb R)$.
The image of $((i^*_b)^{-1} \circ \rho^*_b)[\Theta]_{c,b}$ via the comparison map $c : H^n_b(\overline M,\partial \overline M,\mathbb R) \rightarrow H^n(\overline M,\partial \overline M,\mathbb R)$ is an ordinary relative cohomology class. Its evaluation on the relative fundamental class $[\overline M,\partial \overline M]$ gives an invariant associated with $\rho$.

\begin{definition}[{\bf D2}]
For a representation $\rho : \pi_1(M) \rightarrow G$, define an invariant $\vol_2(\rho)$ by
$$\mathrm{Vol}_2(\rho)= \left\langle (c\circ (i^*_b)^{-1} \circ \rho^*_b) [\Theta]_{c,b}, [\overline M, \partial \overline M] \right\rangle.$$
\end{definition}

In the definition {\bf D2}, a specific continuous bounded volume class $[\Theta]_{c,b}$ in $H^n_{c,b}(G,\mathbb R)$ is involved. The question is naturally raised as to whether, if another continuous bounded volume class is used in {\bf D2} instead of $[\Theta]_{c,b}$, the value of the volume of representations changes or not. One could expect that the definition {\bf D2} does not depend on the choice of continuous bounded volume class but it seems not easy to get an answer directly. It turns out that {\bf D2} is independent of the choice of continuous bounded volume class. For a proof, see Section \ref{sec:lipschitz}.

\begin{proposition}\label{prop:indepwb}
The definition {\bf D2} does not depend on the choice of continuous bounded volume class, that is, for any two continuous bounded volume classes $\omega_b$, $\omega_b' \in H^n_{c,b}(G,\mathbb R)$,
$$\left\langle (c\circ (i^*_b)^{-1} \circ \rho^*_b) (\omega_b), [\overline M, \partial \overline M] \right\rangle=\left\langle (c\circ (i^*_b)^{-1} \circ \rho^*_b) (\omega_b'), [\overline M, \partial \overline M] \right\rangle.$$
\end{proposition}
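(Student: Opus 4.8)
The plan is to reduce the statement to a vanishing assertion about the kernel of the comparison map and then establish that vanishing at the cochain level. Since $H^n_c(G,\mathbb R)$ is one-dimensional, generated by the class of $\omega_\X$, the two given continuous bounded volume classes have the same image under the comparison map $\kappa\colon H^n_{c,b}(G,\mathbb R)\to H^n_c(G,\mathbb R)$, so $\alpha:=\omega_b-\omega_b'$ lies in $\ker\kappa$. As every map occurring in {\bf D2} is linear, the proposition is equivalent to the assertion that
$$\bigl\langle (c\circ(i^*_b)^{-1}\circ\rho^*_b)(\alpha),\,[\overline M,\partial\overline M]\bigr\rangle=0\qquad\text{for every }\alpha\in\ker\kappa.$$

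To prove this I would first pass to $\ell^1$-homology. By Gromov's duality principle, evaluating a bounded cohomology class on an ordinary homology class coincides with pairing it with the image of that class in $\ell^1$-homology; and since $i^*_b\colon H^n_b(\overline M,\partial\overline M,\mathbb R)\to H^n_b(\overline M,\mathbb R)$ is an isometric isomorphism, the quantity above can be rewritten as $\langle\rho^*_b(\alpha),\xi\rangle$ for a suitable class $\xi\in H^{\ell^1}_n(\overline M,\mathbb R)$, namely the one whose image in $H^{\ell^1}_n(\overline M,\partial\overline M,\mathbb R)$ is the $\ell^1$-fundamental class of the pair. Because $\overline M$ is a compact manifold, $\xi$ may be represented by an $\ell^1$-cycle $z$ whose singular simplices are uniformly Lipschitz. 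On the cohomology side, choose a bounded, continuous, alternating, $G$-invariant cocycle $a$ representing $\alpha$; since $\kappa(\alpha)=0$, the cocycle $a$ is a coboundary in the (unbounded) continuous complex, say $\delta b=a$ with $b$ continuous, alternating and $G$-invariant but in general unbounded. Fixing a $\rho$-equivariant Lipschitz map $D\colon\widetilde M\to\X$ of pseudo-developing type, the cocycle $D^*a$ represents $\rho^*_b(\alpha)$.

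It then remains to compute $\langle D^*a,z\rangle$. Using $D^*a=\delta(D^*b)$ and Stokes' formula for singular chains, $\langle D^*a,z\rangle=\langle\delta(D^*b),z\rangle=\langle D^*b,\partial z\rangle$, and $\partial z=0$ because $z$ is a cycle, so $\langle D^*a,z\rangle=0$. The one point requiring justification is the interchange of summations hidden in $\langle\delta(D^*b),z\rangle=\langle D^*b,\partial z\rangle$: this is legitimate since $D^*b$ is bounded on the (uniformly Lipschitz) simplices occurring in $z$ and on their faces. Indeed $D$ is Lipschitz, so those faces have vertex configurations of uniformly bounded diameter in $\X$, while the continuous $G$-invariant function $b$ is automatically bounded on any family of configurations of bounded diameter, such a family being compact modulo $G$.

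The step I expect to be the main obstacle is not this last computation but the passage through the ends: one must choose $D$ with controlled behaviour near $\partial\overline M$ so that $\langle D^*a,z\rangle$ genuinely computes $\langle\rho^*_b(\alpha),\xi\rangle$ on cycles meeting the ends, and one must know that the isometric isomorphism $i^*_b$ --- which itself rests on the amenability of the fundamental groups of the components of $\partial\overline M$ --- dualizes on $\ell^1$-homology so as to produce the class $\xi$ together with a uniformly Lipschitz representative. This is precisely the circle of ideas developed in Section \ref{sec:lipschitz} for the equivalence {\bf D3}$\,\Leftrightarrow\,${\bf D4}; in particular, when $M$ carries a complete Riemannian metric with finite Lipschitz simplicial volume one may simply take $\xi$ to be the Lipschitz $\ell^1$-fundamental class of $M$ itself, and then the argument above applies verbatim.
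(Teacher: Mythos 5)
Your reduction to showing that $\bigl\langle (c\circ(i^*_b)^{-1}\circ\rho^*_b)(\alpha),[\overline M,\partial\overline M]\bigr\rangle=0$ for $\alpha$ in the kernel of the comparison map, and the observation that a representative $a$ of $\alpha$ satisfies $a=\delta b$ with $b$ continuous, alternating, $G$-invariant but unbounded, is exactly the algebraic germ of the paper's argument (there $f_b-f_b'=\delta\beta$). The genuine gap is in how you control the unbounded primitive $b$ at infinity. First, the proposition is stated and used under only the amenable-ends hypothesis, whereas your Stokes argument needs a uniformly Lipschitz $\ell^1$-cycle to pair against: either a locally finite Lipschitz fundamental cycle of $M$ (which exists only under the extra hypothesis $[M]^{\ell^1}_{\mathrm{Lip}}\neq\emptyset$), or a uniformly Lipschitz $\ell^1$-representative of the lift $\xi$ of $[\overline M,\partial\overline M]$ to $H^{\ell^1}_n(\overline M,\mathbb R)$ --- and the latter existence claim ("because $\overline M$ is compact") is not justified: the chain killing $\partial c$ inside the amenable boundary comes from an abstract vanishing theorem and carries no Lipschitz control. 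Second, even granting such a cycle, you need a $\rho$-equivariant $D$ for which the images of the uniformly Lipschitz simplices running out the ends have uniformly bounded diameter in $\X$; this is not automatic from equivariance (the ends are noncompact in $M$ itself) and is precisely where the analytic work lies. Your closing deferral to "the circle of ideas developed in Section \ref{sec:lipschitz}" does not close either gap, because the paper's Section \ref{sec:lipschitz} proof of this proposition does not use $\ell^1$-homology or Lipschitz cycles at all.

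What the paper actually does is worth contrasting with your plan: it extends the bounded cocycles to $G$-invariant bounded alternating cocycles on $\overline\X^{n+1}$ that are uniformly continuous on $\X^n\times\{\xi\}$ for ideal $\xi$ (Lemma \ref{lem:extend}), rewrites the pairing as $\langle\widehat D^*[\bar f_b],[\widehat M]\rangle$ on the end compactification, and evaluates on the \emph{finite} simplicial fundamental cycles $\widehat c_k=c_k+(-1)^{n+1}\mathrm{cone}(\partial c_k)$ coming from an exhaustion by compact cores. The boundary term $\langle\beta,D_*(\widetilde{\partial c_k})\rangle$ produced by the Stokes step then tends to $0$ because the cone map contracts the boundary simplices exponentially as $k\to\infty$ and $\beta$ is uniformly continuous and alternating; the cone contributions tend to $0$ for the same reason. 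This limiting mechanism is the substitute for your absolute-convergence/Fubini step and is exactly the ingredient your proposal is missing in the general (non-Lipschitz) setting. If you want to salvage your route, you would need to either prove the uniformly Lipschitz representability of $\xi$ in $\overline M$ together with the coarse control of $D$ on the ends, or adopt the paper's exhaustion-and-cone argument.
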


Bucher, Burger and Iozzi proved the volume rigidity theorem for hyperbolic lattices as follows.

\begin{theorem}[Bucher, Burger and Iozzi, \cite{BBI}]
Let $n\geq 3$. Let $i :\Gamma \hookrightarrow \mathrm{Isom}^+(\mathbb H^n)$ be a lattice embedding and let $\rho:\Gamma \rightarrow \mathrm{Isom}^+(\mathbb H^n)$ be any representation. Then $$| \vol_2(\rho)| \leq |\vol_2(i)|=\vol(\Gamma \backslash \mathbb H^n),$$
with equality if and only if $\rho$ is conjugated to $i$ by an isometry.
\end{theorem}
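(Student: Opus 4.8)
The plan is to establish the Milnor--Wood inequality first through a norm estimate in bounded cohomology, then to evaluate the invariant at the lattice embedding, and finally to treat the equality case by a boundary-map rigidity argument, which is the heart of the matter.

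\textbf{Step 1 (the inequality).} First I would bound $|\vol_2(\rho)|$ by the product of a cohomological sup-norm and the $\ell^1$-seminorm of the relative fundamental class. Since the amenability hypothesis on $\partial\overline M$ guarantees that $i_b^*$ is an \emph{isometric} isomorphism and the comparison map $c$ is norm non-increasing, while the bounded pull-back $\rho_b^*$ also does not increase the sup-norm, one has
\[
|\vol_2(\rho)| \le \big\| (c\circ (i_b^*)^{-1}\circ \rho_b^*)[\Theta]_{c,b}\big\|_\infty \cdot \big\|[\overline M,\partial\overline M]\big\|_1 \le \big\|[\Theta]_{c,b}\big\|_\infty \cdot \|M\|,
\]
where $\|M\|=\|[\overline M,\partial\overline M]\|_1$ is the simplicial volume. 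The sup-norm $\|[\Theta]_{c,b}\|_\infty$ equals the supremum $v_n$ of the volumes of geodesic simplices in $\mathbb H^n$, which for $n\ge 3$ is the volume of the regular ideal simplex. Combining this with Gromov's proportionality principle $\vol(\Gamma\backslash\mathbb H^n)=v_n\cdot\|M\|$ yields $|\vol_2(\rho)|\le\vol(\Gamma\backslash\mathbb H^n)$.

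\textbf{Step 2 (value at the lattice embedding).} Next I would show $|\vol_2(i)|=\vol(\Gamma\backslash\mathbb H^n)$. For the lattice embedding the pulled-back class $(i_b^*)^{-1}\circ i_b^*\,[\Theta]_{c,b}$ maps under $c$ to the ordinary relative volume class of $M=\Gamma\backslash\mathbb H^n$, so its evaluation on $[\overline M,\partial\overline M]$ is exactly the Riemannian volume. This supplies the middle equality in the statement and shows the upper bound of Step 1 is attained by $i$.

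\textbf{Step 3 (rigidity in the equality case).} This is the main obstacle. Assume $|\vol_2(\rho)|=\vol(\Gamma\backslash\mathbb H^n)$. Using the amenability of the $\Gamma$-action on $\partial\mathbb H^n$ (the Burger--Monod realization of $H^*_{c,b}(G,\R)$ on boundary cochains), I would realize $\rho_b^*[\Theta]_{c,b}$ through a measurable $\rho$-equivariant boundary map $\phi:\partial\mathbb H^n\to\partial\mathbb H^n$, so that the volume cocycle is computed as $(\xi_0,\dots,\xi_n)\mapsto \vol(\phi(\xi_0),\dots,\phi(\xi_n))$, where $\vol$ denotes the signed volume of the ideal geodesic simplex. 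Equality in the estimate of Step 1 then forces this cocycle to attain its maximal modulus $v_n$ for almost every positively oriented \emph{regular} ideal simplex; that is, $\phi$ sends almost every regular ideal simplex to a regular ideal simplex of the same orientation.

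\textbf{Step 4 (concluding the conjugacy).} Finally I would invoke the rigidity of regular ideal simplices: a measurable boundary map carrying almost every positively oriented regular ideal simplex to a regular ideal simplex of the same orientation must agree almost everywhere with the boundary extension of an isometry $g\in\mathrm{Isom}^+(\mathbb H^n)$. Equivariance of $\phi$ then forces $\rho(\gamma)=g\,i(\gamma)\,g^{-1}$ for all $\gamma\in\Gamma$, so $\rho$ is conjugate to $i$ by an isometry. The delicate point, and the crux of the theorem, is the passage from the almost-everywhere simplex-preserving property to the algebraic conclusion that $\phi$ is Möbius; this is precisely where $n\ge 3$ is genuinely used, since in those dimensions regular ideal simplices are the unique volume-maximizers and their configurations are rigid enough to reconstruct a single isometry.
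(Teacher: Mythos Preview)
The paper does not prove this theorem: it is stated as a result of Bucher, Burger and Iozzi and attributed to \cite{BBI}, with no proof given. There is therefore nothing in the paper to compare your proposal against.

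That said, your outline is a faithful sketch of the actual argument in \cite{BBI}: the inequality via the norm estimate $\|[\Theta]_{c,b}\|_\infty\cdot\|M\|$ together with the Gromov--Thurston proportionality, the computation at the lattice embedding, and the rigidity step via a measurable $\rho$-equivariant boundary map $\phi:\partial\mathbb H^n\to\partial\mathbb H^n$ which, under the equality hypothesis, must send almost every regular ideal simplex to a regular ideal simplex and is thereby forced to coincide a.e.\ with a M\"obius transformation. One point deserving more care in your sketch is the assertion that $\|[\Theta]_{c,b}\|_\infty=v_n$: the inequality $\|[\Theta]_{c,b}\|_\infty\le v_n$ is immediate from the cocycle level, but the reverse inequality (that no coboundary lowers the norm) is a nontrivial input, and in \cite{BBI} the argument is organized so that one works directly with the boundary cocycle rather than relying on knowing the exact seminorm of $[\Theta]_{c,b}$.
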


\section{New definition {\bf D4}}\label{sec:cone}

In this section we give a new definition of volume of representations.
It will turn out that the new definition is useful in proving that all definitions of volume of representations are equivalent.

\subsection{End compactification}
Let $\widehat{M}$ be the end compactification of $M$ obtained by adding one point for each end of $M$. 
Let $\widetilde M$ denote the universal cover of $M$. Let $\widehat{\widetilde{M}}$ denote the space obtained by adding to $\widetilde{M}$ one point for each lift of each end of $M$. The points added to $M$(resp. $\widetilde M$) are called \emph{ideal points} of $M$(resp. $\widetilde M$). Denote by $\partial \widehat M$(resp. $\partial \widehat{\widetilde M}$) the set of ideal points of $M$(resp. $\widetilde M$). Let $p : \widetilde{M} \rightarrow M$ be the universal covering map. The covering map $p : \widetilde{M} \rightarrow M$ extends to a map $\widehat{p}: \widehat{\widetilde{M}} \rightarrow \widehat{M}$ and moreover, the action of $\pi_1(M)$ on $\widetilde{M}$ by covering transformations induces an action on $\widehat{\widetilde{M}}$. The action on $\widehat{\widetilde{M}}$ is not free because each point of $\partial \widehat{\widetilde{M}}$ is stabilized by some peripheral subgroup of $\pi_1(M)$.

Note that $\widehat M$ can be obtained by collapsing each connected component of $\partial \overline M$ to a point.
Similarly, $\widehat{\widetilde M}$ can be obtained by collapsing each connected component of $\bar p^{-1}(\partial \overline M)$ to a point where $\bar p : \widetilde{\overline M} \rightarrow \overline M$ is the universal covering map.
We denote the collapsing map by $\pi : \widetilde {\overline M} \rightarrow \widehat{\widetilde M}$.

One advantage of $\widehat M$ is the existence of a fundamental class in singular homology.
While the top dimensional singular homology of $M$ vanishes, the top dimensional singular homology of $\widehat M$ with coefficients in $\mathbb Z$ is isomorphic to $\mathbb Z$. Moreover, it can be easily seen that $H_*(\widehat M,\mathbb R)$ is isomorphic to $H_*(\overline M,\partial \overline M,\mathbb R)$ in degree $* \geq 2$. Hence the fundamental class of $\widehat M$ is well-defined and denote it by $[\widehat M]$.

\subsection{The cohomology groups}
Let $Y$ be a topological space and suppose that a group $L$ acts continuously on $Y$. 
Then the cohomology group $H^*(Y;L,\mathbb R)$ associated with $Y$ and $L$ is defined in the following way. Our main reference for this cohomology is \cite{Du}.

For $k>0$, define $$F^k_\mathrm{alt}(Y,\mathbb R)=\{ f : Y^{k+1} \rightarrow \mathbb R \ | \ f \text{ is alternating} \}.$$ Let $F^k_\mathrm{alt}(Y,\mathbb R)^L$ denote the subspace of $L$-invariant functions where the action of $L$ on $F^k_\mathrm{alt}(Y,\mathbb R)$ is given by 
$$(g \cdot f)(y_0,\ldots,y_k)=f(g^{-1}y_0,\ldots, g^{-1}y_k),$$ for $f \in F^k_\mathrm{alt}(Y)$ and $g \in L$.
Define a coboundary operator $\delta_k : F^k_\mathrm{alt}(Y,\mathbb R) \rightarrow F^{k+1}_\mathrm{alt}(Y,\mathbb R)$ by the usual formula
$$(\delta_k f)(y_0,\ldots,y_{k+1})=\sum_{i=0}^{k+1} (-1)^i f(y_0,\ldots, \hat y_i,\ldots, y_{k+1}).$$
The coboundary operator restricts to the complex $F^*_\mathrm{alt}(Y,\mathbb R)^L$. The cohomology $H^*(Y;L,\mathbb R)$ is defined as the cohomology of this complex. Define $F^*_{\mathrm{alt},b}(Y,\mathbb R)$ as the subspace of $F^*_\mathrm{alt}(Y,\mathbb R)$ consisting of bounded alternating functions. Clearly the coboundary operator restricts to the complex $F^*_{\mathrm{alt},b}(Y,\mathbb R)^L$ and so it defines a cohomology, denoted by $H^*_b(Y;L,\mathbb R)$. 
In particular, for a manifold $M$, the cohomology $H^*(\widetilde M;\pi_1(M),\mathbb R)$ is actually isomorphic to the group cohomology $H^*(\pi_1(M),\mathbb R)$ and, $H^*_b(\widetilde M;\pi_1(M), \mathbb R)$ is isomorphic to the bounded cohomology $H^*_b(\pi_1(M),\mathbb R)$.

\begin{remark}\label{remark1}
Let $L$ and $L'$ be groups acting continuously on topological spaces $Y$ and $Y'$, respectively. Given a homomorphism $\rho :L \rightarrow L'$, any $\rho$-equivariant continuous map $P:Y \rightarrow Y'$ defines a chain map $$P^* :F^*_\mathrm{alt}(Y',\mathbb R)^{L'}\rightarrow F^*_\mathrm{alt}(Y,\mathbb R)^L.$$
Thus it gives a morphism in cohomology. Let $Q:Y \rightarrow Y'$ be another $\rho$-equivariant map. For each $k>0$, one may define $$H_k (y_0,\ldots,y_k)=\sum_{i=0}^i (-1)^k (P(y_0),\ldots,P(y_i),Q(y_i),\ldots, Q(y_k)).$$
Then by a straightforward computation, $$(\partial_{k+1}H_k + H_{k-1}\partial_k)(y_0,\ldots,y_k)=(P(y_0),\ldots,P(y_k))-(Q(y_0),\ldots, Q(y_k)).$$
It follows from the above identity that for any cocycle $f \in F^k_\mathrm{alt}(Y',\mathbb R)^{L'}$, $$ (P^*f - Q^*f)(y_0,\ldots,y_k)=\delta_k (f \circ H_{k-1})(y_0,\ldots,y_k).$$
From this usual process in cohomology theory, one could expect that $P$ and $Q$ induce the same morphism in cohomology.
However, since $f \circ H_{k-1}$ may be not alternating, $P$ and $Q$ may not induce the same morphism in cohomology.
\end{remark}

Recalling that $\Theta :\X^{n+1} \rightarrow \mathbb R$ is a $G$-invariant continuous bounded alternating cocycle, it yields a bounded cohomology class $[\Theta]_b \in H^n_b(\X;G,\mathbb R).$
Let $\overline \X$ be the compactification of $\X$ obtained by adding the ideal boundary $\partial \X$.
Extending the $G$-action on $\X$ to $\overline{\X}$, we can define a cohomology $H^*(\overline{\X};G,\mathbb R)$ and bounded cohomology $H^*_b(\overline{\X};G,\mathbb R)$. In rank $1$ case, since the geodesic simplex is well-defined for any $(n+1)$-tuple of points of $\overline X$, the cocycle $\Theta$ can be extended to a $G$-invariant alternating bounded cocycle $\overline \Theta :\overline X^{n+1} \rightarrow \mathbb R$. Hence $\overline \Theta$ determines a cohomology class $[\overline \Theta] \in H^n(\overline \X;G,\mathbb R)$ and $[\overline \Theta]_b \in H^n_b(\overline \X;G,\mathbb R)$.

Let $\widehat D:\widehat{\widetilde{M}} \rightarrow \overline \X$ be a $\rho$-equivariant continuous map whose restriction to $\widetilde{M}$ is a $\rho$-equivariant continuous map from $\widetilde M$ to $\X$. We will consider only such kinds of equivariant maps throughout the paper. Denote by $D : \widetilde M \rightarrow \mathcal X$ the restriction of $\widehat D$ to $\widetilde M$.
Then $\widehat D$ induces a homomorphism in cohomology, $$\widehat D^* : H^n(\overline \X;G,\mathbb R) \rightarrow H^n(\widehat{\widetilde{M}};\pi_1(M),\mathbb R).$$ 
Note that the action of $\pi_1(M)$ on $\widehat{\widetilde M}$ is not free and hence $H^*(\widehat{\widetilde{M}};\pi_1(M),\mathbb R)$ may not be isomorphic to $H^*(\widehat M,\mathbb R)$. 
Let $H^*_{simp}(\widehat M,\mathbb R)$ be the simplicial cohomology induced from a simplicial structure on $\widehat M$.
Then there is a natural restriction map $H^*(\widehat{\widetilde M};\pi_1(M),\mathbb R) \rightarrow H^*_{simp}(\widehat M,\mathbb R)  \cong H^*(\widehat M,\mathbb R)$.
Thus we regard the cohomology class $\widehat D^*[\overline \Theta]$ as a cohomology class of $H^n(\widehat M,\mathbb R)$.
Let $[\widehat M]$ be the fundamental cycle in $H_n(\widehat M,\mathbb R)\cong \mathbb R$.

\begin{definition}[{\bf D4}] Let $D:\widetilde M \rightarrow \X$ be a $\rho$-equivariant continuous map which is extended to a $\rho$-equivariant map $\widehat D : \widehat{\widetilde{M}} \rightarrow \overline \X$. Then we define an invariant $\vol_4(\rho,D)$ by
$$\vol_4(\rho,D)=\langle \widehat D^*[\overline{\Theta}], [\widehat M] \rangle.$$
\end{definition}

As observed before, $\widehat D^*[\overline{\Theta}]$ may depend on the choice of $\rho$-equivariant map. However it turns out that the value $\vol_4(\rho,D)$ is independent of the choice of $\rho$-equivariant continuous map as follows.

\begin{proposition}\label{prop:1}
Let $\rho :\pi_1(M) \rightarrow G$ be a representation. Then
$$\vol_2(\rho)=\vol_4(\rho,D).$$
\end{proposition}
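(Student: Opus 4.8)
The plan is to produce a commutative diagram relating the bounded-cohomological construction of $\mathbf{D2}$ to the equivariant-map construction of $\mathbf{D4}$, and then to chase the relevant volume classes and fundamental classes through it. First I would observe that the $\rho$-equivariant map $\widehat D : \widehat{\widetilde M} \to \overline{\X}$, together with the covering projection $p : \widetilde M \to M$, fits into a picture in which $\widehat D$ restricted to $\widetilde M$ is honestly $\pi_1(M)$-equivariant, while the collapsing map $\pi : \widetilde{\overline M} \to \widehat{\widetilde M}$ of Section~\ref{sec:cone} provides the bridge to the relative complex $C^*_b(\overline M,\partial\overline M,\mathbb R)$. The key point is that $\overline\Theta$, the extension of $\Theta$ to $\overline\X^{n+1}$, restricts to $\Theta$ on $\X^{n+1}$, so that the class $[\overline\Theta] \in H^n(\overline\X;G,\mathbb R)$ maps to $[\Theta]_b \in H^n_b(\X;G,\mathbb R)$ under the restriction $\overline\X \rightsquigarrow \X$, and the latter is, via Ivanov's complex and Gromov's theorem $H^n_b(\pi_1(\overline M),\mathbb R)\cong H^n_b(\overline M,\mathbb R)$, exactly the class $\rho^*_b[\Theta]_{c,b}$ one uses in $\mathbf{D2}$. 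So $\widehat D$ implements $\rho^*_b$ at the level of these cochain complexes.

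The second step is to match the targets. In $\mathbf{D2}$ one evaluates $(c\circ (i^*_b)^{-1}\circ\rho^*_b)[\Theta]_{c,b}$ on $[\overline M,\partial\overline M]$, whereas in $\mathbf{D4}$ one evaluates $\widehat D^*[\overline\Theta]$ on $[\widehat M]$. Here I would use the two facts already recorded in the excerpt: that $H_*(\widehat M,\mathbb R)\cong H_*(\overline M,\partial\overline M,\mathbb R)$ for $*\ge 2$, under which $[\widehat M]$ corresponds to $[\overline M,\partial\overline M]$; and that the natural restriction $H^*(\widehat{\widetilde M};\pi_1(M),\mathbb R)\to H^*(\widehat M,\mathbb R)$ lets us view $\widehat D^*[\overline\Theta]$ as a genuine class on $\widehat M$. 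Concretely, I would factor the evaluation through the collapsing map $\pi$: the relative fundamental cycle of $(\overline M,\partial\overline M)$ pushes forward under $\pi$ (equivalently under $\overline M \to \widehat M$) to $[\widehat M]$, and a cocycle representing $\widehat D^*[\overline\Theta]$ pulls back along $\pi$ to a relative bounded cocycle on $\overline M$ supported away from $\partial\overline M$ (here the ideal points absorb the boundary, which is precisely what makes the relative vanishing work). Tracking the image of $[\overline\Theta]$ through $\widehat D^*$, then through $\pi^*$ and the isomorphism $i_b^*$, shows it represents $(i^*_b)^{-1}\rho^*_b[\Theta]_{c,b}$; applying the comparison map $c$ and pairing with the fundamental class then gives equality of the two numbers.

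The main obstacle, and the step deserving the most care, is the interface between the non-free $\pi_1(M)$-action on $\widehat{\widetilde M}$ and the relative bounded cohomology of $(\overline M,\partial\overline M)$. The action on $\widehat{\widetilde M}$ is not free — each ideal point is fixed by a peripheral (amenable) subgroup — so $H^*(\widehat{\widetilde M};\pi_1(M),\mathbb R)$ is not simply the (bounded) cohomology of $\widehat M$, and the isometric isomorphism $i_b^*: H^*_b(\overline M,\partial\overline M,\mathbb R)\to H^*_b(\overline M,\mathbb R)$ from \cite{BBIPP,KK} enters exactly to resolve this. One must check that the cocycle $\widehat D^*\overline\Theta$, after being pulled back via $\pi$ to $\widetilde{\overline M}$, is $\pi_1(M)$-invariant, bounded, and vanishes on simplices mapping into $\partial\overline M$ — the last because such simplices are sent by $\widehat D\circ\pi$ into a single point of $\overline\X$, on which the alternating cocycle $\overline\Theta$ vanishes — so that it genuinely defines a class in $H^n_b(\overline M,\partial\overline M,\mathbb R)$ whose image under $i_b^*$ (and Gromov's isomorphism) is $\rho^*_b[\Theta]_{c,b}$. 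Once this compatibility is nailed down, the remaining diagram chase with the comparison map $c$ and the fundamental-class identification is routine, and Proposition~\ref{prop:1} follows.
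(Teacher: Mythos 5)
Your proposal is correct and follows essentially the same route as the paper: pull $\overline\Theta$ back along $\widehat D$, transfer to the relative complex via the collapsing map $\pi$ (using the alternating property to get vanishing on boundary simplices), identify the resulting relative class with $(i^*_b)^{-1}\rho^*_b[\Theta]_{c,b}$ through the isomorphism $i^*_b$ and Gromov's theorem, and conclude by pairing with $[\overline M,\partial\overline M]$ and the identity $\pi_*[\overline M,\partial\overline M]=[\widehat M]$. This is precisely the commutative-diagram chase in the paper's proof of Proposition~\ref{prop:1}.
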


\begin{proof}
Reminding that the continuous bounded cohomology $H^*_{c,b}(G,\mathbb R)$ can be computed isomorphically from the complex $C^*_{c,b}(\mathcal X,\mathbb R)_\mathrm{alt}$, there is the natural inclusion $C^*_{c,b}(\mathcal X,\mathbb R)_\mathrm{alt} \subset F^*_{\mathrm{alt},b}(\mathcal X,\mathbb R)$. Denote the homomorphism in cohomology induced from the inclusion by $i_G : H^k_{c,b}(G,\mathbb R)\rightarrow H^k_b(\X;G,\mathbb R)$. Clearly, $i_G([\Theta]_{c,b})=[\Theta]_b$.

The bounded cohomology $H^*_b(\pi_1(M),\mathbb R)$ is obtained by the cohomology of the complex $C^*_b(\widetilde M,\mathbb R)_\mathrm{alt}^{\pi_1(M)}$. Since $C^*_b(\widetilde M,\mathbb R)_\mathrm{alt}= F^*_{\mathrm{alt},b}(\widetilde M,\mathbb R)$,  the induced map $i_M : H^k_b(\pi_1(M),\mathbb R) \rightarrow H^k_b(\widetilde M;\pi_1(M),\mathbb R)$ is the identity map. 
Let $\widehat D : \widehat{\widetilde M}\rightarrow \overline{\mathcal X}$ be a $\rho$-equivariant map which maps $\widetilde M$ to $\mathcal X$. Then consider the following commutative diagram.
$$ \xymatrixcolsep{4pc}\xymatrix{
H^n(\overline \X;G,\mathbb R) \ar[r]^-{\widehat D^*} & H^n(\widehat{\widetilde{M}};\pi_1(M),\mathbb R) \ar[rd]^-{\pi^*} \\
H^n_b(\overline{\X};G,\mathbb R) \ar[r]^-{\widehat D^*_b} \ar[d]^-{res_\X} \ar[u]_-{\bar c} & 
H^n_b(\widehat{\widetilde{M}};\pi_1(M),\mathbb R) \ar[d]^-{res_M} \ar[rd]^-{\pi^*_b} \ar[u]_-{\hat c} & H^n(\overline M,\partial \overline  M,\mathbb R)\\
H^n_b(\X;G,\mathbb R) \ar[r]^-{D_b^*} &
H^n_b(\widetilde M;\pi_1(M),\mathbb R) &
H^n_b(\overline M,\partial \overline M,\mathbb R) \ar[l]_-{i^*_b} \ar[u]_-{c} \\
H^n_{c,b}(G,\mathbb R) \ar[u]_-{i_G} \ar[r]^-{\rho^*_b} & H^n_b(\pi_1(M),\mathbb R) \ar[u]_-{i_M}
}$$
where $\pi : \widetilde{\overline M} \rightarrow \widehat{\widetilde M}$ is the collapsing map. Note that the map $\rho^*_b$ in the bottom of the diagram is actually induced from the restriction map $D: \widetilde M \rightarrow \X$. However it does not depend on the choice of equivariant map but only on the homomorphism $\rho$. In other words, any continuous equivariant map from $\widetilde M$ to $\X$ gives rise to the same map $\rho^*_b: H^*_{c,b}(G,\mathbb R) \rightarrow H^*_b(\pi_1(M),\mathbb R)$. For this reason, we denote it by $\rho^*_b$ instead of $D^*_{c,b}$.

Note that $\pi$ induces a map $\pi^* : F^*_\mathrm{alt}(\widehat{\widetilde M},\mathbb R) \rightarrow F^*_\mathrm{alt}(\widetilde{\overline M},\mathbb R)$. It follows from the alternating property that the image of $\pi^*$ is contained in $C^*(\overline M,\partial \overline M,\mathbb R)$. Hence the map $\pi^* : H^n(\widehat{\widetilde M};\pi_1(M),\mathbb R) \rightarrow H^n(\overline M,\partial \overline M,\mathbb R)$ makes sense. One can understand $\pi^*_b : H^n_b(\widehat{\widetilde M};\pi_1(M),\mathbb R) \rightarrow H^n_b(\overline M,\partial \overline M,\mathbb R)$ in a similar way.

Noting that $\bar c([\overline \Theta]_b)=[\overline \Theta]$ and  $res_\X([\overline \Theta]_b)=[\Theta]_b$, it follows from the above commutative diagram that
{\setlength\arraycolsep{2pt}
\begin{eqnarray*}
((i^*_b)^{-1}\circ i_M \circ \rho_b^*)[\Theta]_{c,b} &=& ((i^*_b)^{-1}\circ D^*_b \circ i_G) [\Theta]_{c,b} \\
&=& ((i^*_b)^{-1}\circ D^*_b \circ res_\X) [\overline \Theta]_b\\
&=& ((i^*_b)^{-1}\circ res_M \circ \widehat D^*_b) [\overline \Theta]_b \\
&=& (\pi_b^* \circ \widehat D^*_b)[\overline \Theta]_b
\end{eqnarray*}}
Hence
{\setlength\arraycolsep{2pt}
\begin{eqnarray*}
\mathrm{Vol}_2(\rho)&=& \langle (c \circ (i^*_b)^{-1}\circ i_M \circ \rho_b^*)[\Theta]_{c,b}, [\overline M, \partial \overline M] \rangle \\
&=& \langle (c \circ \pi_b^* \circ \widehat D^*_b)[\overline \Theta]_b, [\overline M, \partial \overline M] \rangle \\
&=& \langle (\pi^* \circ \widehat D^* \circ \bar c)[\overline \Theta]_b, [\overline M, \partial \overline M] \rangle \\
&=& \langle (\pi^* \circ \widehat D^*)[\overline \Theta], [\overline M, \partial \overline M] \rangle \\
&=& \langle \widehat D^*[\overline \Theta], \pi_* [\overline M, \partial \overline M] \rangle \\
&=& \langle \widehat D^*[\overline \Theta], [\widehat M] \rangle \\
&=& \vol_4(\rho,D)
\end{eqnarray*}}
This completes the proof.
\end{proof}

Proposition \ref{prop:1} implies that the value $\vol_4(\rho,D)$ does not depend on the choice of continuous equivariant map. Hence from now on we use the notation $\vol_4(\rho):=\vol(\rho,D)$. Furthermore,
Proposition \ref{prop:1} allows us to interpret the invariant $\vol_2(\rho)$ in terms of a pseudo-developing map via $\vol_4(\rho)$ in the next section. Note that a pseudo-developing map for $\rho$ is a specific kind of $\rho$-equivariant continuous map $\widehat{\widetilde{M}} \rightarrow \overline \X$. 

\section{Pseudo-developing map and Definition {\bf D1}}\label{sec:pseudo}

Dunfield \cite{Du99} introduced the notion of pseudo-developing map in order to define the volume of representations $\rho : \pi_1(M)\rightarrow \mathrm{SO}(3,1)$ for a noncompact complete hyperbolic $3$-manifold $M$ of finite volume. We start by recalling the definition of pseudo-developing map.

\begin{definition}[Cone map]
Let $\mathcal A$ be a set, $t_0 \in \mathbb R$, and $Cone(\mathcal A)$ be the cone obtained from $\mathcal A\times [t_0,\infty]$ by collapsing $\mathcal A \times \{\infty\}$ to a point, called $\infty$. A map $\widehat D:Cone(\mathcal A) \rightarrow \overline \X$ is a \emph{cone map} if $\widehat D (Cone(\mathcal A))\cap \partial \X =\{\widehat D(\infty)\}$ and, for all $a \in \mathcal A$ the map $\widehat D|_{a\times [t_0,\infty]}$ is either the constant to $\widehat D(\infty)$ or the geodesic ray from $\widehat D(a,t_0)$ to $\widehat D(\infty)$, parametrized in such a way that the parameter $(t-t_0)$, $t\in [t_0,\infty]$, is the arc length.
\end{definition}

For each ideal point $v$ of $M$, fix a product structure $T_v \times [0,\infty)$ on the end relative to $v$. The fixed product structure induces a cone structure on a neighborhood of $v$ in $\widehat M$, which is obtained from $T_v \times [0,\infty]$ by collapsing $T_v \times \{\infty\}$ to a point $v$. We lift such structures to the universal cover. 
Let $\tilde v$ be an ideal point of $\widetilde M$ that projects to the ideal point $v$. Denote by $E_{\tilde v}$ the cone at $\tilde v$ that is homeomorphic to $P_{\tilde v} \times [0,\infty]$, where $P_{\tilde v}$ covers $T_v$ and $P_{\tilde v} \times \{\infty\}$ is collapsed to $\tilde v$.


\begin{definition}[Pseudo-developing map]\label{def:3.2}
Let $\rho : \pi_1(M) \rightarrow G$ be a representation. A \emph{pseudo-developing map} for $\rho$ is a piecewise smooth $\rho$-equivariant map $D : \widetilde M \rightarrow \X$. Moreover $D$ is required to extend to a continuous map
$\widehat D: \widehat{\widetilde{M}} \rightarrow \overline \X$ with the property that there exists $t \in \mathbb R^+$ such that for each end $E_{\tilde v}=P_{\tilde v} \times [0,\infty]$ of $\widehat{\widetilde{M}}$, the restriction of $\widehat D$ to $P_{\tilde v} \times [t,\infty]$ is a cone map. 
\end{definition}

\begin{definition}
A \emph{triangulation} of $\widehat M$ is an identification of $\widehat M$ with a complex obtained by gluing together with simplicial attaching maps. It is not required for the complex to be simplicial, but it is required that open simplicies embed. 
\end{definition}

Note that a triangulation of $\widehat M$ always exists and it lifts uniquely to a triangulation of $\widehat{\widetilde M}$.
Given a triangulation of $\widehat M$, one can define the straightening of pseudo-developing maps as follows.


\begin{definition}[Straightening map]
Let $\widehat M$ be triangulated.
Let $\rho:\pi_1(M)\rightarrow G$ be a representation and $D :\widetilde{M} \rightarrow \X$ a pseudo-developing map for $\rho$.
A straightening of $D$ is a continuous piecewise smooth $\rho$-equivariant map $Str( D):\widehat{\widetilde{M}} \rightarrow  \overline \X$ such that
\begin{itemize}
\item for each simplex $\sigma$ of the triangulation, $Str( D)$ maps $\widetilde \sigma$ to $Str( D \circ \widetilde \sigma)$,
\item for each end $E_{\tilde v}=P_{\tilde v}\times [0,\infty]$ there exists $t\in \mathbb R$ such that $Str( D)$ restricted to $P_{\tilde v} \times [t,\infty]$ is a cone map.
\end{itemize}
where $\widetilde \sigma$ is a lift of $\sigma$ to $\widehat{\widetilde M}$ and $Str(D \circ \widetilde \sigma)$ is the geodesic straightening of $ D\circ \widetilde \sigma : \Delta^n \rightarrow \overline \X$.
\end{definition}

Note that any straightening of a pseudo-developing map is also a pseudo-developing map.

\begin{lemma}
Let $\widehat M$ be triangulated. Let $\rho :\pi_1(M) \rightarrow G$ be a representation and $ D:\widetilde{M} \rightarrow \X$ a pseudo-developing map for $\rho$. Then a straightening $Str(D)$ of $D$ exists and furthermore, $Str(D) : \widehat{\widetilde{M}} \rightarrow \overline \X$ is always equivariantly homotopic to $\widehat D$ via a homotopy that fixes the vertices of the triangulation.
\end{lemma}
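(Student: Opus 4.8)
The plan is to construct the straightening $Str(D)$ explicitly on the triangulation and then build an explicit equivariant homotopy to $\widehat D$. First I would handle the existence of $Str(D)$. Working on the universal cover, pick the triangulation of $\widehat{\widetilde M}$ obtained by lifting the chosen triangulation of $\widehat M$, and work one skeleton at a time. On the $0$-skeleton, set $Str(D)$ equal to $\widehat D$ on every vertex (finite vertices go to $\X$, ideal vertices go to $\partial\X$, by the hypothesis on $\widehat D$). On a simplex $\sigma$ of the triangulation, define $Str(D)$ on $\widetilde\sigma$ to be the geodesic straightening $Str(D\circ\widetilde\sigma)$ of the singular simplex $D\circ\widetilde\sigma:\Delta^n\to\overline\X$, using that in rank $1$ the geodesic simplex spanned by any $(n+1)$-tuple in $\overline\X$ is well defined (as recalled in Section~\ref{sec:cone}). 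This is consistent on faces because straightening is compatible with taking faces, and it is $\rho$-equivariant because $\widehat D$ is $\rho$-equivariant, the $\pi_1(M)$-action permutes the lifted simplices, and $G$ acts by isometries so it commutes with geodesic straightening. Near each ideal point, for $t$ large enough the end simplices of the triangulation lie in the product region $P_{\tilde v}\times[t,\infty]$, and the straightened map there, having all its ``$\infty$-side'' vertices sent to the single ideal point $\widehat D(\tilde v)$, is by construction a cone map in the sense of Definition~\ref{def:3.2}; this gives the second bullet of the definition of straightening. Piecewise smoothness is clear since geodesics depend smoothly on their endpoints away from the collapsed locus.

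Next I would build the homotopy. For a finite simplex $\sigma$ the geodesic straightening homotopy is the standard one: for $p\in\widetilde\sigma$ with barycentric coordinates $(t_0,\dots,t_n)$ and $s\in[0,1]$, linearly interpolate between $D|_{\widetilde\sigma}$ and its straightening along geodesics, e.g. via the iterated ``cone to the first vertex'' construction, and check it fixes the vertices. Equivariance again follows because $G$ acts by isometries. The only delicate point is continuity and equivariance of this homotopy across the ideal points: on an end region $P_{\tilde v}\times[t,\infty]$ both $\widehat D$ and $Str(D)$ are already cone maps with the same apex $\widehat D(\tilde v)\in\partial\X$, so one can homotope them through cone maps by interpolating the ``base'' maps $P_{\tilde v}\times\{t\}\to\X$ and then re-coning; this keeps the image of $\infty$ fixed at $\widehat D(\tilde v)$ at all times. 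I would patch the simplexwise homotopies with the end homotopies, noting that on the overlap (end simplices) both prescriptions agree up to reparametrization, so a standard partition-of-unity or collar argument glues them into a single continuous $\rho$-equivariant homotopy $H:\widehat{\widetilde M}\times[0,1]\to\overline\X$ fixing all vertices of the triangulation, with $H(\cdot,0)=\widehat D$ and $H(\cdot,1)=Str(D)$.

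The main obstacle I anticipate is controlling the homotopy, and indeed $Str(D)$ itself, in a neighborhood of the ideal points: one must ensure that geodesic straightening of the end simplices is compatible with the cone structure (so that the straightened map really is a cone map, not merely ``close'' to one), and that the interpolating homotopy stays inside $\overline\X$, remains continuous as points approach $\partial\X$, and does not move $\widehat D(\tilde v)$. The rank-$1$ hypothesis is what makes this tractable, since it guarantees $\overline\X$ is closed under taking geodesic simplices so that straightening extends continuously to the ideal boundary; in higher rank this step would fail. A secondary technical point is checking that all constructions descend from $\widehat{\widetilde M}$ to $\widehat M$ — i.e. are genuinely $\pi_1(M)$-equivariant, including at ideal points whose stabilizers are the peripheral subgroups — but this is automatic from the equivariance of $\widehat D$ and the naturality of geodesic straightening under the isometric $G$-action.
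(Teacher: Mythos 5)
Your proposal is correct and follows essentially the same route as the paper: fix $Str(D)$ on the vertices to agree with $\widehat D$, extend piecewise geodesically over the lifted triangulation, and use the straight-line (geodesic) homotopy, which is $\rho$-equivariant and fixes the vertices. Your treatment of the ideal points and the cone-map condition is simply a more detailed spelling-out of what the paper leaves as ``it can be easily seen.''
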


\begin{proof}
First, set $Str(D)(V)=f(V)$ for every vertex $V$ of the triangulation. Then extend $Str(D)$ to a map which is piecewise straight with respect to the triangulation. This is always possible because $\X$ is contractible.
Note that $\widehat D$ and $Str(D)$ agree on the ideal vertices of $\widehat{\widetilde{M}}$ and are equivariantly homotopic via the straight line homotopy between them. Hence it can be easily seen that the extension is a straightening of $D$.
\end{proof}

For any pseudo-developing map $D:{\widetilde{M}} \rightarrow \X$ for $\rho$, 
$$\int_M D^*\omega_\X$$ is always finite. This can be seen as follows. 
We stick to the notations used in Definition \ref{def:3.2}. We may assume that the restriction of $\widehat D$ to each $E_{\tilde v}=P_{\tilde v} \times [0,\infty]$ is a cone map. Choose a fundamental domain $F_0$ of $T_{v}$ in $P_{\tilde v}$. 
Then, there exists $t\in \mathbb R^+$ such that $$ \left|\int_{T_v \times [t,\infty)} D^*\omega_\X \right| =\vol_n (\mathrm{Cone}(D( F_0 \times \{t\}))) \leq \frac{1}{n-1}\vol_{n-1}(D(F_0\times \{t \}))$$ where $\vol_{n-1}$ denotes the $(n-1)$-dimensional volume.
The last inequality holds for any Hadamard manifold with sectional curvature at most $-1$. See \cite[Section 1.2]{Gro82}.
Hence the integral of $D^*\omega_\X$ over $M$ is finite.

\begin{definition}[{\bf D1}]
Let $D:{\widetilde{M}} \rightarrow \X$ be a pseudo-developing map for a representation $\rho : \pi_1(M) \rightarrow G$.
Define an invariant $\vol_1(\rho,D)$ by 
$$\vol_1(\rho,D)=\int_M D^*\omega_\X. $$
\end{definition}

In the case that $G=\mathrm{SO}(n,1)$, Francaviglia \cite{Fr04} showed that the definition {\bf D1} does not depend on the choice of pseudo-developing map. We give a self-contained proof for this in rank $1$ case.

\begin{proposition}\label{prop:14equi}
Let $\rho : \pi_1(M) \rightarrow G$ be a representation. Then for any pseudo-developing map $D :\widetilde M \rightarrow \X$,  $$\vol_1(\rho,D)=\vol_4(\rho).$$
Thus, $\vol_1(\rho,D)$ does not depend on the choice of pseudo-developing map.
\end{proposition}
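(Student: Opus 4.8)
The plan is to show that the ordinary cohomology class $\widehat D^*[\overline\Theta] \in H^n(\widehat M,\mathbb R)$, evaluated against $[\widehat M]$, computes the same number as the integral $\int_M D^*\omega_\X$. Since by the previous lemma any pseudo-developing map is equivariantly homotopic (fixing ideal vertices) to a straightened one, and a straightening is again a pseudo-developing map, I would first reduce to the case where $D$ is a straight pseudo-developing map with respect to a fixed triangulation of $\widehat M$; the homotopy invariance of $\vol_4$ (which is precisely the content of Proposition \ref{prop:1}, i.e.\ $\vol_4$ is independent of the $\rho$-equivariant map) together with the fact that $\int_M D^*\omega_\X$ is a homotopy-type invariant through pseudo-developing maps (one must check this separately, but it follows from Stokes' theorem applied to the straight-line homotopy, using that $\omega_\X$ is closed and that the homotopy is eventually a cone map so the boundary contributions at the ideal points vanish) lets me assume $D=Str(D)$.

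Next, with $\widehat M$ triangulated, I would represent the fundamental class $[\widehat M]$ by the sum of top simplices $\sum_j \sigma_j$ of the triangulation (with appropriate signs/orientations), lifted to $\widehat{\widetilde M}$. The cochain $\widehat D^*\overline\Theta$ evaluated on a simplex $\sigma_j$ with vertices $v_0,\dots,v_n$ is by definition $\overline\Theta(\widehat D(v_0),\dots,\widehat D(v_n)) = \int_{[\widehat D(v_0),\dots,\widehat D(v_n)]}\omega_\X$, the signed volume of the straight geodesic simplex on the images of the vertices. On the other hand, since $D=Str(D)$ is straight, $D$ maps the lift $\widetilde\sigma_j$ onto exactly that geodesic simplex, so $\int_{\sigma_j} D^*\omega_\X = \int_{[\widehat D(v_0),\dots,\widehat D(v_n)]}\omega_\X$ as well — here one uses the change-of-variables/degree formula for the (possibly degenerate) straight simplex, which is legitimate because $\omega_\X$ is a smooth top form and the straightening is piecewise smooth. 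Summing over $j$ gives $\langle \widehat D^*[\overline\Theta],[\widehat M]\rangle = \int_M D^*\omega_\X = \vol_1(\rho,D)$, hence $\vol_4(\rho)=\vol_1(\rho,D)$.

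The main obstacle I expect is the bookkeeping at the ideal points: one must be careful that the pieces $E_{\tilde v}=P_{\tilde v}\times[0,\infty]$ which are sent to cones (and in particular may be sent into $\partial\X$) contribute correctly and finitely on both sides, and that the triangulation of $\widehat M$ is compatible with the cone structure near each ideal vertex so that the simplices meeting an ideal vertex $v$ are genuinely straight geodesic simplices with one (or more) vertex at $\widehat D(v)\in\overline\X$. This is exactly where the rank $1$ hypothesis enters, via the extension of $\Theta$ to $\overline\Theta$ on $\overline\X^{n+1}$ and the finiteness estimate $\vol_n(\mathrm{Cone}(D(F_0\times\{t\})))\le \frac{1}{n-1}\vol_{n-1}(D(F_0\times\{t\}))$ already recalled above; the cone contributions to $\widehat D^*\overline\Theta$ on the simplices touching ideal vertices match the integral of $D^*\omega_\X$ over the corresponding cone neighborhoods precisely because the straightening is a cone map there. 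Once this local matching near ideal vertices is established, the global statement is just additivity over the finitely many top simplices of the triangulation, and the independence of $\vol_1$ from the pseudo-developing map follows immediately from the already-proven independence of $\vol_4$.
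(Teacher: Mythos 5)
Your proposal is correct and follows essentially the same route as the paper: straighten $D$ with respect to a triangulation of $\widehat M$, use the independence of $\vol_4$ from the choice of equivariant map (Proposition \ref{prop:1}) to compute $\vol_4$ on the straightened map simplex-by-simplex as $\int_M Str(D)^*\omega_\X$, and then identify $\int_M Str(D)^*\omega_\X$ with $\int_M D^*\omega_\X$ by a Stokes-type homotopy argument controlling the cone ends (the paper delegates this last step to the argument of Dunfield's Lemma 2.5.1, which is exactly the verification you flag as needing to be checked separately).
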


\begin{proof}
Let $\mathcal T$ be a triangulation of $\widehat M$ with simplices $\sigma_1, \ldots, \sigma_N$. Then the triangulation gives rise to a fundamental cycle $\sum_{i=1}^N \sigma_i$ of $\widehat M$. Let $Str(D)$ be a straightening of $D$ with respect to the triangulation $\mathcal T$. 
Since $Str(D)$ is a $\rho$-equivariant continuous map,
we have
\begin{eqnarray*}
\vol_4(\rho):=\vol_4(\rho,D)&=&\langle Str(D)^*[\overline \Theta], [\widehat M] \rangle 
= \langle \overline \Theta, \sum_{i=1}^N Str(\widehat D(\sigma_i))\rangle \\
&=& \sum_{i=1}^N \int_{Str(\widehat D(\sigma_i))} \omega_\X 
= \int_M Str(D)^*\omega_\X.
\end{eqnarray*}
Since both $Str(D)$ and $\widehat D$ are pseudo-developing maps for $\rho$ that agree on the ideal points of $\widehat{\widetilde{M}}$, it can be proved, using the same arguments as the proof of \cite[Lemma 2.5.1]{Du99}, that $$\int_M Str(D)^*\omega_\X =\int_M D^*\omega_\X=\vol_1(\rho,D)$$
Finally we obtain the desired equality.
\end{proof}

\begin{remark}
While {\bf D1} is defined with only pseudo-developing map, the definition {\bf D4} is defined with any equivariant map. This is one advantage of the definition {\bf D4}. By Proposition \ref{prop:14equi}, the notation $\vol_1(\rho):=\vol_1(\rho,D)$ makes sense.
\end{remark}

\section{Lipschitz simplicial volume and Definition {\bf D3}}\label{sec:lipschitz}

In this section, $M$ is assumed to be a Riemannian manifold with finite Lipschitz simplicial volume.
Gromov \cite[Section 4.4]{Gro82} introduced the Lipschitz simplicial volume of Riemannian manifolds. One can define the Lipschitz constant for each singular simplex in $M$ by giving the Euclidean metrics on the standard simplices. Then the Lipschitz constant of a locally finite chain $c$ of $M$ is defined as the supremum of the Lipschitz constants of all singular simplices occurring in $c$. The Lipschitz simplicial volume of $M$ is defined by the infimum of the $\ell^1$-norms of all locally finite fundamental cycles with finite Lipschitz constant. Let $[M]_\mathrm{Lip}^{\ell^1}$ be the set of all locally finite fundamental cycles of $M$ with finite $\ell^1$-seminorm and finite Lipschitz constant. If $[M]_\mathrm{Lip}^{\ell^1}=\emptyset$, the Lipschitz simplicial volume of $M$ is infinite.

In the case that $[M]_\mathrm{Lip}^{\ell^1} \neq \emptyset$, S. Kim and I. Kim \cite{KK14} give a new definition of volume of representations as follows:
Given a representation $\rho : \pi_1(M) \rightarrow G$, $\rho$ induces a canonical pullback map
$\rho^*_b : H^*_{c,b}(G,\mathbb{R}) \rightarrow H^*_b(\pi_1(M),\mathbb{R})\cong H^*_b(M,\mathbb R)$ in continuous bounded cohomology.
Hence for any continuous bounded volume class $\omega_b \in H^n_{c,b}(G,\mathbb R)$, we obtain a bounded cohomology class $\rho^*_b(\omega_b)\in H^n_b(M,\mathbb{R})$.  
Then, the bounded cohomology class $\rho^*_b(\omega_b)$ can be evaluated on $\ell^1$-homology classes in $H^{\ell^1}_n(M,\mathbb{R})$ by the Kronecker products
$$ \langle\cdot ,\cdot \rangle : H^*_b(M,\mathbb{R}) \otimes H^{\ell^1}_*(M,\mathbb{R}) \rightarrow \mathbb{R}.$$
For more detail about this, see \cite{KK14}.

\begin{definition}[{\bf D3}] 
We define an invariant $\mathrm{Vol}_3(\rho)$ of $\rho$ by
$$\mathrm{Vol}_3(\rho) = \inf \langle \rho^*_b(\omega_b), \alpha \rangle$$
where the infimum is taken over all $\alpha\in [M]^{\ell^1}_\mathrm{Lip}$ and all $\omega_b \in H^n_{c,b}(G,\mathbb R)$ with $c(\omega_b)=\omega_{\mathcal X}$. 
\end{definition}

One advantage of {\bf D3} is to not need the isomorphism $H^n_b(\overline M,\partial \overline M,\mathbb R) \rightarrow H^n_b(\overline M,\mathbb R)$.
When $M$ admits the isomorphism above, we will verify that the definition {\bf D3} is eventually equivalent to the other definitions of volume of representations. 

\begin{lemma}\label{lem:indep}
Suppose that $M$ is a noncompact, connected, orientable, apherical, tame Riemannian manifold with finite Lipschitz simplicial volume and each end of $M$ has amenable fundamental group.
Then for any $\alpha \in [M]^{\ell^1}_\mathrm{Lip}$ and any continuous bounded volume class $\omega_b$, $$\langle \rho_b^* (\omega_b), \alpha \rangle = \langle (c\circ (i^*_b)^{-1} \circ \rho^*_b)(\omega_b), [\overline M,\partial \overline M] \rangle$$
\end{lemma}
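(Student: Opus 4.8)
The plan is to pass from the $\ell^1$-homology pairing on $M$ to the relative cohomology pairing on $(\overline M, \partial \overline M)$ through a commutative diagram that compares the duality pairings on the two spaces. First I would recall the natural maps relating the two sides. On the homology side, a locally finite fundamental cycle $\alpha \in [M]^{\ell^1}_{\mathrm{Lip}}$ has finite $\ell^1$-seminorm, so it represents an $\ell^1$-homology class $[\alpha] \in H_n^{\ell^1}(M,\mathbb R)$; under the identification $H_n^{\ell^1}(M,\mathbb R) \cong H_n^{\ell^1}(\overline M,\partial \overline M,\mathbb R)$ (valid in degree $n \geq 2$, using that the amenability of the ends makes the relative $\ell^1$-homology of the pair agree with the $\ell^1$-homology of the interior) the class $[\alpha]$ is sent to a relative class whose image under the comparison map $H_n^{\ell^1}(\overline M,\partial \overline M,\mathbb R) \to H_n(\overline M,\partial \overline M,\mathbb R)$ is precisely the ordinary relative fundamental class $[\overline M, \partial \overline M]$. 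This last point is the homological analogue of the statement already used for $\widehat M$, and follows because any locally finite fundamental cycle of $M$ maps to a relative fundamental cycle of $(\overline M, \partial \overline M)$ under the collapsing/inclusion of chains.

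Next I would organize the duality statement. The Kronecker product $\langle \cdot,\cdot\rangle : H^n_b(M,\mathbb R)\otimes H^{\ell^1}_n(M,\mathbb R)\to\mathbb R$ is compatible, via the comparison maps $c$ (cohomology) and the dual comparison on homology, with the ordinary Kronecker product $\langle\cdot,\cdot\rangle : H^n(\overline M,\partial\overline M,\mathbb R)\otimes H_n(\overline M,\partial\overline M,\mathbb R)\to\mathbb R$. Concretely, for the bounded class $\beta_b := \rho_b^*(\omega_b) \in H^n_b(\pi_1(M),\mathbb R)\cong H^n_b(M,\mathbb R)$, one has on the one hand $\langle \beta_b,[\alpha]\rangle$, and on the other hand, transporting $\beta_b$ through the isometric isomorphism $(i_b^*)^{-1}: H^n_b(\overline M,\mathbb R)\to H^n_b(\overline M,\partial\overline M,\mathbb R)$ (the amenability hypothesis on $\partial\overline M$), one gets a relative bounded class $(i_b^*)^{-1}(\beta_b)$; applying the relative comparison map $c$ and pairing with $[\overline M,\partial\overline M]$ gives the right-hand side of the lemma. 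The equality of the two numbers then reduces to the naturality square:
\begin{equation*}
\langle \beta_b, [\alpha]\rangle = \langle (i_b^*)^{-1}(\beta_b), [\alpha]_{\mathrm{rel}}^{\ell^1}\rangle = \langle c\big((i_b^*)^{-1}(\beta_b)\big), [\overline M,\partial\overline M]\rangle,
\end{equation*}
where the first equality uses that $i_b^*$ is an isomorphism and that $[\alpha]$ corresponds to $[\alpha]_{\mathrm{rel}}^{\ell^1}$ under $H^{\ell^1}_n(M)\cong H^{\ell^1}_n(\overline M,\partial\overline M)$, and the second uses that the $\ell^1$-pairing descends to the ordinary pairing under $c$ once the $\ell^1$ fundamental class is sent to the ordinary relative fundamental class. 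I would prove the first equality at the cochain/chain level: a relative bounded cochain vanishes on simplices in $\partial\overline M$, so pairing it with a locally finite relative fundamental cycle is insensitive to the boundary part, and the adjointness of $i_b^*$ with the chain-level collapsing map gives the identity.

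The main obstacle I anticipate is making precise the identification $H^{\ell^1}_n(M,\mathbb R)\cong H^{\ell^1}_n(\overline M,\partial\overline M,\mathbb R)$ together with the compatibility of the two Kronecker pairings, i.e. verifying that the long exact sequence of the pair $(\overline M,\partial\overline M)$ in $\ell^1$-homology degenerates appropriately because $\partial\overline M$ has amenable fundamental group (so its $\ell^1$-homology vanishes in the relevant degrees, by Gromov's vanishing theorem). This is where one must be careful that $\alpha$ has finite $\ell^1$-seminorm so that it genuinely defines an $\ell^1$-homology class, and that the Lipschitz condition plays no role here (it is only needed to make $[M]^{\ell^1}_{\mathrm{Lip}}$ nonempty and for the infimum in {\bf D3}, not for this identity). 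Once this duality framework is in place, the lemma follows by chasing the diagram; everything else is the standard interplay between bounded cohomology, $\ell^1$-homology, and the amenable-boundary isomorphism already invoked for {\bf D2}.
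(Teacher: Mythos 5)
Your outer framework (homotopy invariance of $\ell^1$-homology, vanishing of the $\ell^1$-homology of the amenable ends, chain-level adjointness of the Kronecker pairings, and the observation that the Lipschitz condition is irrelevant here) is fine, but the proof has a genuine gap at its central step. To pass from $\langle (i_b^*)^{-1}(\beta_b), [\alpha]^{\ell^1}_{\mathrm{rel}}\rangle$ to $\langle c\bigl((i_b^*)^{-1}(\beta_b)\bigr), [\overline M,\partial\overline M]\rangle$ you need to know that, under your identification $H^{\ell^1}_n(M,\mathbb R)\cong H^{\ell^1}_n(\overline M,\partial\overline M,\mathbb R)$, the class of $\alpha$ coincides with the image of the ordinary relative fundamental class under the natural map $H_n(\overline M,\partial\overline M,\mathbb R)\rightarrow H^{\ell^1}_n(\overline M,\partial\overline M,\mathbb R)$. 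You justify this via a ``comparison map $H^{\ell^1}_n(\overline M,\partial\overline M,\mathbb R)\rightarrow H_n(\overline M,\partial\overline M,\mathbb R)$'', but no such map exists: the comparison goes the other way (finite chains include into $\ell^1$-chains). Moreover the statement you actually need is not formal. Pushing a locally finite fundamental cycle $c$ of $M$ into $\overline M$ produces an infinite $\ell^1$-chain, not an ordinary relative fundamental cycle of $(\overline M,\partial\overline M)$; identifying its relative $\ell^1$-class with that of $[\overline M,\partial\overline M]$ requires controlling the infinite tail of $c$ in the ends (a uniform-boundary-condition/amenability argument, or an excision argument via a compact core), and that identification is exactly where the content of the lemma lies. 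As written, you assume the point that has to be proved.

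The paper avoids this issue entirely. It takes a compact core $K$ of $M$ and uses that all the inclusion-induced maps among $C^*_b(M,\mathbb R)$, $C^*_b(\overline M,\mathbb R)$, $C^*_b(\overline M,\partial\overline M,\mathbb R)$ and $C^*_b(\overline M,\overline M-K,\mathbb R)$ are isomorphisms on bounded cohomology in degrees $\geq 2$, so that $\rho_b^*(\omega_b)$ can be represented by a bounded cocycle $z_b$ vanishing on simplices contained in $\overline M-K$. Pairing $z_b$ with the locally finite cycle $c$ then only sees the finite chain $c|_K$, and the genuinely standard fact (L\"oh) that $c|_K$ represents $[\overline M,\overline M-K]$ in ordinary relative homology evaluates both sides of the identity; no comparison of $\ell^1$-fundamental classes of the pair $(\overline M,\partial\overline M)$ is ever needed. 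If you wish to keep your route, you must supply a proof of the missing identification of classes in $H^{\ell^1}_n(\overline M,\partial\overline M,\mathbb R)$ --- for instance by deducing it from the compact-core statement --- at which point you will essentially have reconstructed the paper's argument.
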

\begin{proof}
When $M$ is a $2$-dimensional manifold, the proof is given in \cite{KK14}. Actually the proof in general case is the same.
We here sketch the proof for the reader's convenience.
Let $K$ be a compact core of $M$. Note that $K$ is a compact submanifold with boundary that is a deformation retract of $M$. Consider the following commutative diagram,
$$ \xymatrixcolsep{2pc}\xymatrix{
C^*_b(M,\mathbb{R}) &
C^*_b(\overline M,\mathbb{R}) \ar[l]_-{j_b^*} &
C^*_b(\overline M,\partial \overline M,\mathbb{R}) \ar[l]_-{i^*_b} \\
 & C^*_b(\overline M, \overline M-K, \mathbb{R})  \ar[u]^-{l_b^*} \ar[ru]_-{q_b^*} & }$$
where every map in the above diagram is the map induced from the canonical inclusion.
Every map in the diagram induces an isomorphism in bounded cohomology in $*\geq2$.
Thus, there exists a cocycle $z_b \in C^n_b(\overline M,\overline M-K,\mathbb{R})$ such that
$l^*_b([z_b]) = \rho^*_b(\omega_b)$.

Let $c=\sum_{i=1}^\infty a_i \sigma_i$ be a locally finite fundamental $\ell^1$-cycle with finite Lipschitz constant representing $\alpha \in [M]^{\ell^1}_\mathrm{Lip}$. Then, we have
$$\langle \rho^*_b(\omega_b),\alpha \rangle = \langle l_b^* ([z_b]), \alpha \rangle = \langle z_b, c \rangle.$$
Since $z_b$ vanishes on simplices with image contained in $\overline M-K$, we have $\langle z_b, c \rangle =\langle z_b, c|_K \rangle$
where  $c|_K=\sum_{\mathrm{im}\sigma_i \cap K \neq \emptyset} a_i \sigma_i$. It is a standard fact that $c|_K$ represents the relative fundamental class $[\overline M,\overline M-K]$ in $H_n(\overline M, \overline M-K,\mathbb{R})$ (see \cite[Theorem 5.3]{Loh07}.)
On the other hand, we have 
\begin{eqnarray*} 
\langle (c \circ (i^*_b)^{-1} \circ \rho_b^*)(\omega_b), [\overline M,\partial \overline M] \rangle &=& \langle (c\circ q^*_b)([z_b]), [\overline M,\partial \overline M]) \rangle \\
&=& \langle [z_b], q_*[\overline M,\partial \overline M] \rangle \\
&=& \langle [z_b], [\overline M,\overline M-K] \rangle=\langle z_b,c|_K \rangle .
\end{eqnarray*}
Therefore, we finally get the desired identity.
\end{proof}

By Lemma \ref{lem:indep} we can reformulate the definition {\bf D3} as follows.
$$ \vol_3(\rho)= \inf_{\omega_b} \langle (c\circ (i^*_b)^{-1} \circ \rho_b^*)(\omega_b), [\overline M,\partial \overline M] \rangle$$
where infimum is taken over all continuous bounded volume classes. Noting that $[\Theta]_{c,b} \in H^n_{c,b}(G,\mathbb R)$ is a continuous bounded volume class, it is clear that $$\vol_3(\rho)\leq \vol_2(\rho).$$

It is conjecturally true that the comparison map $H^n_{c,b}(G,\mathbb R) \rightarrow H^n_c(G,\mathbb R)$ is an isomorphism for any connected semisimple Lie group $G$ with finite center. Hence conjecturally, $\vol_2(\rho)=\vol_3(\rho)$. In spite of the absence of the proof of the conjecture, we will give a proof for $\vol_2(\rho)=\vol_3(\rho)$ by using the definition {\bf D4}.

\begin{lemma}\label{lem:extend}
Let $\omega_b \in H^n_{c,b}(G,\mathbb R)$ be a continuous bounded volume class. Let $f_b :\mathcal X^{n+1} \rightarrow \mathbb R$ be a continuous bounded alternating $G$-invariant cocycle representing $\omega_b$. Then $f_b$ is extended to a bounded alternating $G$-invariant cocycle $\bar f_b : \overline{\mathcal X}^{n+1} \rightarrow \mathbb R$. Furthermore, $\bar f_b$ is uniformly continuous on $\X^n \times \{\xi\}$ for any $\xi \in \partial \X$.
\end{lemma}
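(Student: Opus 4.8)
The plan is to construct $\bar f_b$ by extending the cocycle one ideal coordinate at a time, exploiting that the stabilizer of an ideal point is amenable; the extension will not be canonical, which does not matter. The starting observation is elementary: for every $y\in\X$ the ``frozen last vertex'' cochain $g_y\in F^{n-1}_{\mathrm{alt},b}(\X,\R)$, $g_y(x_0,\dots,x_{n-1}):=f_b(x_0,\dots,x_{n-1},y)$, satisfies $\delta g_y=(-1)^nf_b$, as one reads off from $\delta f_b=0$ evaluated on the $(n+2)$-tuple $(x_0,\dots,x_n,y)$. Thus $g_y$ is a bounded alternating ``primitive'' of $(-1)^nf_b$ attached to the auxiliary vertex $y$; to put an ideal point $\xi\in\partial\X$ into the last slot I need a primitive that is invariant under $P_\xi:=\mathrm{Stab}_G(\xi)$ and depends $G$-equivariantly on $\xi$.

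To produce one, recall that $G$ has rank $1$, so $\partial\X=G/P$ is a single $G$-orbit and each $P_\xi$ is amenable. I would fix a $P_{\xi_0}$-invariant mean $\mathfrak m_{\xi_0}$ on $L^\infty(\X)$, transport it by $\mathfrak m_{g\xi_0}:=g_*\mathfrak m_{\xi_0}$ — which is well defined exactly because $\mathfrak m_{\xi_0}$ is $P_{\xi_0}$-invariant — and set
$$\bar f_b(x_0,\dots,x_{n-1},\xi):=\mathfrak m_\xi\bigl(y\mapsto f_b(x_0,\dots,x_{n-1},y)\bigr),$$
extending by the alternating property to allow the ideal coordinate in any slot, and iterating the same averaging over the remaining finite slots to define $\bar f_b$ on tuples carrying several ideal coordinates (with $\bar f_b=0$ whenever two ideal coordinates coincide, as forced). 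Commuting the coboundary operator past the means then shows immediately that $\bar f_b$ is bounded by $\lVert f_b\rVert_\infty$, alternating, restricts to $f_b$ on $\X^{n+1}$, is $G$-invariant (from $P_\xi$-invariance of $\mathfrak m_\xi$ and $G$-equivariance of the family $\{\mathfrak m_\xi\}$), and is a cocycle — with one ideal vertex, $\delta\bar f_b=0$ is just the identity $\delta(\bar f_b(\,\cdot\,,\dots,\,\cdot\,,\xi))=(-1)^nf_b$, which holds because a mean returns the constant function $(-1)^nf_b(x_0,\dots,x_n)$ unchanged. The only non-conceptual point here, which nonetheless must be checked carefully, is the bookkeeping that carries the cocycle identity, the invariance, and the requisite measurability through the multi-ideal-point iteration.

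For the last assertion, uniform continuity of $\bar f_b$ on $\X^n\times\{\xi\}$, write $\vec x=(x_0,\dots,x_{n-1})$. From the construction $\bigl|\bar f_b(\vec x,\xi)-\bar f_b(\vec x\,',\xi)\bigr|\le\sup_{y\in\X}\bigl|f_b(\vec x,y)-f_b(\vec x\,',y)\bigr|$, and perturbing one coordinate at a time and using the cocycle identity of $f_b$ on the $(n+2)$-tuple obtained by inserting the two values of that coordinate, this is bounded by $n^2\,\omega_{f_b}\!\bigl(d_{\X^n}(\vec x,\vec x\,')\bigr)$, where
$$\omega_{f_b}(\delta):=\sup\bigl\{\,|f_b(w_0,w_1,\dots,w_n)|\ :\ w_0,\dots,w_n\in\X,\ d(w_0,w_1)\le\delta\,\bigr\}.$$
So everything reduces to the ``uniform degeneracy'' statement $\omega_{f_b}(\delta)\to 0$ as $\delta\to0$, i.e.\ that $|f_b|$ is uniformly small on configurations in which two vertices collide. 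For the model cocycle $f_b=\overline\Theta$ this is immediate from the rank-$1$ geometry: a geodesic simplex with an edge of length $\le\delta$ has volume $O(\delta)$, uniformly, because its opposite face is an $(n-1)$-simplex of uniformly bounded volume \cite{IY82,Gro82}. \textbf{The main obstacle is to establish $\omega_{f_b}(\delta)\to0$ for an arbitrary bounded continuous cocycle $f_b$, for which no such geometric meaning is available.} I expect to handle it by a normalization argument: using $G$-invariance one may take $w_0=o$ a fixed basepoint, so $w_1\to o$, and then applying the cocycle identity once more one may push the auxiliary vertices $w_2,\dots,w_n$ onto the geodesic rays they determine from $o$ (or into a fixed compact set), reducing to the behaviour of $f_b$ on a relatively compact family of configurations, where continuity of $f_b$ together with the uniform volume bounds forces the decay; alternatively one first replaces $f_b$ by the geodesically straightened representative of the same class, which does carry the required geometric estimate.
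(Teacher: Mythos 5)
Your construction via invariant means on the amenable stabilizers $P_\xi$ is a genuinely different route from the paper's: there, $\bar f_b(\bar x_0,\dots,\bar x_n)$ is defined as $\lim_{t\to\infty}f_b(c_0(t),\dots,c_n(t))$ along geodesic rays $c_i$ toward the $\bar x_i$, with existence and well-definedness of the limit coming from the exponential convergence of asymptotic rays in rank $1$ together with uniform continuity of $f_b$ on $\X^{n+1}$; all the structural properties (alternating, bounded, $G$-invariant, cocycle, and the uniform continuity on $\X^n\times\{\xi\}$ via transitivity of $P_\xi$ on $\X$) then pass through the limit in one stroke, uniformly in the number of ideal vertices. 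Your proof, however, has a genuine gap precisely where you flag it: the entire ``furthermore'' part, and with it the continuity of $\bar f_b(\cdot,\xi)$ that the later application needs, reduces to $\omega_{f_b}(\delta)\to 0$, and you do not prove this. Of your two suggested fixes, the second (replacing $f_b$ by a straightened representative) does not prove the lemma as stated, since the statement concerns the given representative and is applied in Proposition \ref{lem:indepwb} to arbitrary continuous representatives $f_b,f_b'$. The missing ingredient is exactly the one the paper invokes: since $f_b$ is alternating it vanishes when two vertices coincide, so $\omega_{f_b}(\delta)$ is bounded by a modulus of uniform continuity of $f_b$ on $\X^{n+1}$; your uniform degeneracy claim is thus equivalent to that uniform continuity, which you should establish or cite rather than leave open. (The paper's own one-line justification — continuity plus $G$-invariance plus transitivity of $G$ on $\X$ — is itself terse, as $\X^{n+1}/G$ is noncompact, but this is the load-bearing analytic fact in either approach and cannot be skipped.)

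A second difficulty is underestimated when you call the multi-ideal-vertex case ``bookkeeping.'' Iterated invariant means need not commute, so neither the alternating property in two ideal slots nor the cocycle identity on tuples containing two or more ideal points follows from your one-ideal-vertex computation; you would need to symmetrize over the order of averaging, or otherwise argue compatibility of the means $\mathfrak m_\xi$ and $\mathfrak m_\eta$, and also address whether the inner average remains an admissible integrand for the outer mean. Since the lemma asks for a cocycle on all of $\overline{\X}^{\,n+1}$, and $[\bar f_b]\in H^n(\overline{\X};G,\mathbb R)$ is used as such in (\ref{eqn:A}), this must actually be carried out. The radial-limit definition sidesteps both problems, which is presumably why the paper adopts it.
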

\begin{proof}
For any $(\bar x_0, \ldots, \bar x_n) \in \overline{\mathcal X}^{n+1}$, define
$$\bar f_b(\bar x_0,\ldots, \bar x_n) = \lim_{t\rightarrow \infty} f_b(c_0(t),\ldots,c_n(t)),$$ 
where each $c_i(t)$ is a geodesic ray toward $\bar x_i$. Here, for $x \in \X$, we say that $c : [0,\infty) \rightarrow \X$ is a geodesic ray toward $x$ if there exists $t\in [0,\infty)$ such that the restriction map $c|_{[0,t]}$ of $c$ to $[0,t]$ is a geodesic with $c(t)=x$ and $c|_{[t,\infty)}$ is constant to $x$.
Then it is clear that $\bar f_b(x_0,\ldots,x_n)=f_b(x_0,\ldots, x_n)$ for $(x_0,\ldots,x_n) \in \mathcal X^{n+1}$.
To see the well-definedness of $\bar f_b$, we need to show that for other geodesic rays $c_i'(t)$ toward $\bar x_i$, 
\begin{eqnarray}\label{eqn:welldefine} \lim_{t\rightarrow \infty} f_b(c_0(t),\ldots,c_n(t))=\lim_{t\rightarrow \infty} f_b(c_0'(t),\ldots,c_n'(t)).\end{eqnarray}
Note that the limit always exists because $f_b$ is bounded.
In rank $1$ case, the distance between two geodesic rays with the same endpoint decays exponentially to $0$ as they go to the endpoint. Moreover since $f_b$ is $G$-invariant and $G$ transitively acts on $\X$, $f_b$ is uniformly continuous on $\X^{n+1}$. Thus, for any $\epsilon>0$ there exists some number $T>0$ such that 
$$ | f_b(c_0(t),\ldots,c_n(t)) - f_b(c_0'(t),\ldots,c_n'(t)) | <\epsilon$$
for all $t>T$. This implies (\ref{eqn:welldefine}) and hence $\bar f_b$ is well-defined.

The alternating property of $\bar f_b$ actually comes from $f_b$. Due to the alternating property of $f_b$, we have \begin{eqnarray*} 
\bar f_b(\bar x_0, \ldots, \bar x_i,\ldots, \bar x_j,\ldots, \bar x_n) &=& \lim_{t\rightarrow \infty} f_b(c_0(t),\ldots,c_i(t),\ldots,c_j(t),\ldots, c_n(t)) \\
&=&\lim_{t\rightarrow \infty} -f_b(c_0(t),\ldots,c_j(t),\ldots,c_i(t),\ldots, c_n(t))\\
&=&-\bar f_b(\bar x_0,\ldots, \bar x_j,\ldots, \bar x_i,\ldots, \bar x_n)
\end{eqnarray*}
Therefore we conclude that $\bar f_b$ is alternating. 
The boundedness and $G$-invariance of $\bar f_b$ immediately follows from the boundedness and $G$-invariance of $f_b$. Furthermore, it is easy to check that $\bar f_b$ is a cocycle by a direct computation.

Now it remains to prove that $\bar f_b$ is uniformly continuous on $\X^n\times \{\xi\}$. It is obvious that $\bar f_b$ is continuous on $\X^n\times \{\xi\}$. Noting that the parabolic subgroup of $G$ stabilizing $\xi$ acts on $\X$ transitively, it can be easily seen that $\bar f_b$ is uniformly continuous on $\X^n\times \{\xi\}$.
\end{proof}

The existence of $\bar f_b$ allows us to reformulate $\vol_3$ in terms of $\vol_4$.
Following the proof of Proposition \ref{prop:1}, we get 
\begin{eqnarray}\label{eqn:A} \langle (c\circ (i^*_b)^{-1} \circ \rho_b^*)(\omega_b), [\overline M,\partial \overline M] \rangle = \langle \widehat D^* [\bar f_b], [\widehat M] \rangle  \end{eqnarray} 
The last term $\langle \widehat D^* [\bar f_b], [\widehat M] \rangle$ above is computed by $\langle \widehat D^*\bar f_b, \widehat c \rangle$ for any equivariant map $\widehat D$ and fundamental cycle $\widehat c$ of $\widehat M$. By choosing proper equivariant map and fundamental cycle, we will show that $\langle \widehat D^* [\bar f_b], [\widehat M] \rangle$ does not depend on the choice of continuous bounded volume class.

\begin{proposition}\label{lem:indepwb}
Let $\omega_b$ and $\omega_b'$ be continuous bounded volume classes. Let $\bar f_b$ and $\bar f_b'$ be the bounded alternating cocycles in $F^n_\mathrm{alt}(\overline X;G,\mathbb R)$ associated with $\omega_b$ and $\omega_b'$ respectively as in Lemma \ref{lem:extend}. Then 
$$\langle \widehat D^* [\bar f_b], [\widehat M] \rangle=\langle \widehat D^* [\bar f_b'], [\widehat M] \rangle.$$
\end{proposition}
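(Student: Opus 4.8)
The plan is to convert the statement, by means of the identity (\ref{eqn:A}) and Lemma~\ref{lem:indep}, into a vanishing statement about the canonical pullback $\rho^*_b$ paired with a locally finite Lipschitz fundamental cycle of $M$. By (\ref{eqn:A}) --- which is obtained by running the proof of Proposition~\ref{prop:1} with $\overline\Theta$ replaced by the extension $\bar f_b$ furnished by Lemma~\ref{lem:extend} --- one has $\langle\widehat D^*[\bar f_b],[\widehat M]\rangle=\langle(c\circ(i^*_b)^{-1}\circ\rho^*_b)(\omega_b),[\overline M,\partial\overline M]\rangle$, and, fixing any $\alpha\in[M]^{\ell^1}_\mathrm{Lip}$ (such $\alpha$ exists because the Lipschitz simplicial volume is finite, the standing hypothesis of this section), Lemma~\ref{lem:indep} identifies the right-hand side with $\langle\rho^*_b(\omega_b),\alpha\rangle$. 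Since $\rho^*_b$ is linear and both $\omega_b$ and $\omega_b'$ are volume classes, the difference $\eta:=\omega_b-\omega_b'$ lies in the kernel of the comparison map $H^n_{c,b}(G,\mathbb R)\to H^n_c(G,\mathbb R)$, so everything reduces to proving $\langle\rho^*_b(\eta),\alpha\rangle=0$ for every $\alpha\in[M]^{\ell^1}_\mathrm{Lip}$.

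To do this I would represent $\eta$ by a continuous, bounded, alternating, $G$-invariant cocycle $e\colon\mathcal X^{n+1}\to\mathbb R$. Its class vanishes in $H^n_c(G,\mathbb R)$, which is computed by $C^*_c(\mathcal X,\mathbb R)^G_\mathrm{alt}$; hence there is a continuous alternating $G$-invariant cochain $u\colon\mathcal X^n\to\mathbb R$ --- in general unbounded --- with $\delta u=e$. Fix a Lipschitz $\rho$-equivariant map $D\colon\widetilde M\to\mathcal X$; such a map exists --- for instance, a smooth pseudo-developing map, whose cone ends are automatically Lipschitz because asymptotic geodesic rays in a Hadamard manifold do not move apart. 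Then $D^*e$ is a bounded cocycle representing $\rho^*_b(\eta)$, and, writing $v_0,\dots,v_n$ for the vertices of the model simplex and $\widetilde\tau$ for a lift of a singular simplex $\tau$ of $M$, the formula $\psi(\tau):=u\bigl(D\widetilde\tau(v_0),\dots,D\widetilde\tau(v_{n-1})\bigr)$ defines --- by $G$-invariance of $u$ --- a singular $(n-1)$-cochain on $M$ with $\delta\psi=D^*e$.

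Now let $\alpha\in[M]^{\ell^1}_\mathrm{Lip}$ be represented by a locally finite fundamental cycle $z=\sum_k a_k\sigma_k$ with $\sum_k|a_k|<\infty$ and Lipschitz constant at most $L$. The decisive point --- the only step that carries real content --- is that, although $\psi$ is unbounded on $M$, it is \emph{uniformly} bounded on every simplex occurring in $z$ or in $\partial z$: each $\sigma_k$ is $L$-Lipschitz, hence so is its lift $\widetilde\sigma_k$, so $D\circ\widetilde\sigma_k$ has image of diameter at most $R:=\sqrt2\,L\,\mathrm{Lip}(D)$; and since $u$ is $G$-invariant and $G$ acts transitively on $\mathcal X$, the supremum of $|u|$ over all $n$-tuples of diameter at most $R$ equals its supremum over the compact set of such tuples with first coordinate a fixed basepoint, which is finite. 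Therefore the double series $\sum_k a_k\sum_i(-1)^i\psi(\partial_i\sigma_k)$ converges absolutely and may be reorganized by $(n-1)$-simplices, giving
\[
\langle\rho^*_b(\eta),\alpha\rangle=\langle D^*e,z\rangle=\langle\delta\psi,z\rangle=\langle\psi,\partial z\rangle=0,
\]
since $z$ is a cycle. By the reduction of the first paragraph this is exactly the asserted equality $\langle\widehat D^*[\bar f_b],[\widehat M]\rangle=\langle\widehat D^*[\bar f_b'],[\widehat M]\rangle$.

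The main obstacle is that the comparison map provides a primitive $u$ only in the \emph{unbounded} continuous complex, so one cannot simply declare $\rho^*_b(\eta)$ to be zero in bounded cohomology; the device that resolves this is to pair with a Lipschitz cycle rather than an arbitrary one and to use the $G$-invariance of $u$ to upgrade the resulting geometric control (simplices of bounded diameter) into a genuine numerical bound on $\psi$. This is also precisely where the finiteness of the Lipschitz simplicial volume and Lemma~\ref{lem:indep} are used. A secondary, routine point is the existence of a globally Lipschitz $\rho$-equivariant map $\widetilde M\to\mathcal X$, which follows from the fact that cone maps in nonpositively curved spaces are Lipschitz.
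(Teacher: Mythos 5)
Your reduction in the first paragraph is legitimate within Section~\ref{sec:lipschitz} (though note it makes the conclusion depend on the hypothesis that $M$ carries a complete metric with finite Lipschitz simplicial volume, which the paper's own argument does not use and which matters because Proposition~\ref{prop:indepwb} is stated without that hypothesis). The genuine gap is in the second step: the existence of a globally Lipschitz $\rho$-equivariant map $D\colon\widetilde M\to\X$ is not a ``routine point'' --- it is false in general, and it is exactly what your argument needs. If $\gamma\in\pi_1(M)$ is a peripheral element with $d(x,\gamma x)\to 0$ as $x$ goes out the corresponding end (as happens for every cusp of a complete finite-volume hyperbolic metric, a case squarely covered by the theorem), while $\rho(\gamma)$ is loxodromic with translation length $\ell>0$, then equivariance forces $d(D(x),D(\gamma x))=d\bigl(D(x),\rho(\gamma)D(x)\bigr)\ge\ell$ for all $x$, so the Lipschitz constant of any equivariant $D$ blows up along the end. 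Your justification (``cone maps are Lipschitz because asymptotic rays do not move apart'') only controls the target geometry; the product structures $T_v\times[0,\infty)$ used to define cone maps carry no metric content, so nothing relates them to the Riemannian metric on $M$. Without a uniform bound on $\mathrm{diam}\bigl(D\widetilde\sigma_k(\Delta^n)\bigr)$ over the simplices of $z$, your primitive $\psi$ is not uniformly bounded on the simplices of $z$ and $\partial z$, the absolute convergence needed to justify $\langle\delta\psi,z\rangle=\langle\psi,\partial z\rangle$ fails, and the proof collapses. Nor can you repair this by shrinking the simplices of $z$: keeping $\mathrm{diam}(D\widetilde\sigma_k)$ bounded deep in a cusp forces simplices of diameter comparable to the injectivity radius, and the number of such simplices per unit depth grows exponentially, so such a cycle cannot have finite $\ell^1$-norm.

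The paper circumvents precisely this difficulty by a different route: it pairs $f_b-f_b'=\delta\beta$ against the \emph{finite} relative fundamental cycles $c_k$ of an exhaustion $(M_k)$, so the unbounded continuous $G$-invariant primitive $\beta$ is only ever evaluated on $D_*(\widetilde{\partial c_k})$, finitely many simplices supported on $\partial M_k$; taking $D$ to be a cone map makes these simplices shrink to points as $k\to\infty$, and uniform continuity of $\beta$ together with the alternating property kills the limit, while the cone part of $\widehat c_k$ is handled by the uniform continuity of $\bar f_b$ and $\bar f_b'$ on $\X^n\times\{\xi\}$ from Lemma~\ref{lem:extend}. To complete your argument you would need either to adopt this exhaustion strategy or to prove a genuine substitute for the Lipschitz claim, e.g.\ a uniform diameter bound for $D\widetilde\sigma_k$ adapted to a particular choice of $z$ --- neither of which is supplied.
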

\begin{proof}
It suffices to prove that for some $\rho$-equivariant map $\widehat D :\widehat{\widetilde M} \rightarrow \overline \X$ and fundamental cycle $\widehat c$ of $\widehat M$,
$$\langle \widehat D^*\bar f_b, \widehat c\rangle = \langle \widehat D^*\bar f_b', \widehat c\rangle.$$
To show this, we will prove that for some sequence $(\widehat c_k)_{k\in \mathbb N}$ of fundamental cycles of $\widehat M$  $$\lim_{k\rightarrow \infty} \left( \langle \widehat D^*\bar f_b, \widehat c_k \rangle - \langle \widehat D^*\bar f_b', \widehat c_k\rangle \right)=0.$$

Let $v_1,\ldots,v_s$ be the ideal points of $M$. 
As in Section \ref{sec:pseudo}, fix a product structure $T_{v_i} \times [0,\infty]$ on the end relative to $v_i$ for each $i=1,\ldots,s$ and then lift such structures to the universal cover. We stick to the notations used in Section \ref{sec:pseudo}.
Set $$M_k = M-\cup_{i=1}^s T_{v_i} \times (k,\infty].$$
Then $(M_k)_{k\in \mathbb N}$ is an exhausting sequence of compact cores of $M$. The boundary $\partial M_k$ of $M_k$ consists of $\cup_{i=1}^s T_{v_i} \times \{k\}$. Let $\mathcal T_0$ be a triangulation of $M_0$. Then we extend it to a triangulation on $\widehat M$ as follows. First note that $\mathcal T_0$ induces a triangulation on each $T_{v_i}$. 
Let $\tau$ be an $(n-1)$-simplex of the induced triangulation on $T_{v_i}$ for some $i \in \{1,\ldots,s\}$.
Then we attach $\pi(\tau \times [0,\infty])$ to $T_{v_i}\times\{0\}$ along $\tau \times \{0\}$ where $\pi :\overline M \rightarrow \widehat M$ be the collapsing map.
Since $\pi$ is an embedding on $\tau \times [0,\infty)$ and $\pi$ maps $\tau \times \{\infty\}$ to the ideal point $v_i$,  
it can be easily seen that $cone(\tau):=\pi(\tau \times [0,\infty])$ is an $n$-simplex. Hence we can obtain a triangulation of $\widehat M$ by attaching each $cone(\tau)$ to $\partial M_0$, which is denoted by $\widehat{\mathcal T_0}$.

Next, we extend $\mathcal T_0$ to a triangulation of $M_k$. In fact, $M_k$ is decomposed as follows. $$M_k = M_0 \cup \bigcup_{i=1}^s T_{v_i} \times [0,k].$$ Hence attach each $\tau \times [0,k]$ to $M_0$ along $\tau \times\{0\}$ and then triangulate $\tau \times [0,k]$ by using the prism operator \cite[Chapter 2.1]{Hatcher}. Via this process, we obtain a triangulation of $M_k$, denoted by $\mathcal T_k$. Note that $\mathcal T_0$ and $\mathcal T_k$ induce the same triangulation on each $T_{v_i}$. In addition, one can obtain a triangulation $\widehat{\mathcal T_k}$ of $\widehat M$ from $\mathcal T_k$ in a similar way that $\widehat{\mathcal T_0}$ is obtained from $\mathcal T_0$ as above.

Let $c_k$ be the relative fundamental class of $(M_k,\partial M_k)$ induced from $\mathcal T_k$. Then it can be seen that 
$$\widehat c_k = c_k + (-1)^{n+1}cone(\partial c_k)$$
is the fundamental cycle of $\widehat M$ induced from $\widehat{\mathcal T_k}$. Any simplex occurring in $c_k$ is contained in $M_k$.
Now we choose a pseudo-developing map $\widehat D :\widehat{\widetilde M} \rightarrow \overline X$. 
Let $\tilde v_i$ be a lift of $v_i$ to $\widehat{\widetilde M}$. Let $P_{\tilde v_i} \times [0,\infty]$ be the cone structure of a neighborhood of $\tilde v_i$ where $P_{\tilde v_i} $ covers $T_{v_i}$ and $P_{\tilde v_i}\times \{\infty\}$ is just the ideal point $\tilde v_i$. We may assume that $\widehat D$ is a cone map on each $P_{\tilde v_i} \times [0,\infty]$. Let $\tilde c_k$ be a lift of $c_k$ to a cochain in $\widetilde M$ and $\widetilde{\partial c_k}$ be a lift of $\partial c_k$. Let $\tau \times \{0\}$ be an $(n-1)$-simplex in $T_{v_i} \times \{0\}$ occurring in $\partial c_0$ and 
$\tilde \tau$ be a lift of $\tau$ to $P_{\tilde v_i}$. Then $\tilde \tau \times \{k\}$ is a lift of $\tau \times \{k\} \in \partial c_k$. Since $\widehat D$ is a cone map on $P_{\tilde v_i} \times [0,\infty]$, $D(\tilde \tau \times [0,\infty])$ is the geodesic cone over $\tilde \tau \times \{0\}$ with top point $\tilde v_i$ in $\overline \X$. Hence the diameter of $D(\tilde \tau \times \{k\})$ decays exponentially to $0$ as $k \rightarrow \infty$ for each $\tau$. 

By a direct computation, we have
\begin{eqnarray*}
\langle \widehat D^*\bar f_b - \widehat D^*\bar f_b' , \widehat c_k\rangle &=& \langle \widehat D^*\bar f_b - \widehat D^*\bar f_b', \tilde c_k \rangle +  (-1)^{n+1}\langle \widehat D^*\bar f_b - \widehat D^*\bar f_b', cone(\widetilde{\partial c_k}) \rangle \\
&=& \langle \bar f_b - \bar f_b', \widehat D_*(\tilde c_k) \rangle +  (-1)^{n+1}\langle \bar f_b - \bar f_b', \widehat D_*(cone(\widetilde{\partial c_k}))\rangle \\
&=& \langle f_b - f_b', D_*(\tilde c_k) \rangle +  (-1)^{n+1}\langle \bar f_b - \bar f_b', \widehat D_*(cone(\widetilde{\partial c_k}))\rangle 
\end{eqnarray*}
The last equality comes from the fact that $\widehat D_*(\tilde c_k)$ is a singular chain in $\X$. Since $f_b$ and $f_b'$ are continuous bounded alternating cocycles representing the continuous volume class $\omega_{\mathcal X} \in H^n_c(G,\mathbb R)$, there is a continuous alternating $G$-invariant function $\beta : \X^n \rightarrow \mathbb R$ such that $f_b -f_b' =\delta \beta$. Hence 
$$\langle f_b - f_b', D_*(\tilde c_k) \rangle = \langle \delta \beta, D_*(\tilde c_k) \rangle = \langle \beta, \partial D_*(\tilde c_k) \rangle = \langle \beta,  D_*(\widetilde{\partial c_k}) \rangle.$$

As observed before, since the diameter of all simplices occurring in $D_*(\widetilde{\partial c_k})$ decays to $0$ as $k \rightarrow \infty$ and moreover $\beta$ is uniformly continuous on $\X$, we have $$\lim_{k\rightarrow \infty}  \langle \beta,  D_*(\widetilde{\partial c_k}) \rangle =0$$

Note that $D(cone(\tilde \tau \times \{k\}))$ is the geodesic cone over $D(\tilde \tau \times \{k\})$ with top point $\tilde v_i$. By Lemma \ref{lem:extend}, both $\bar f_b$ and $\bar f_b'$ are uniformly continuous on $\X^n \times \{\tilde v_i\}$.
Since the diameter of $D(\tilde \tau \times \{k\})$ decays to $0$ as $k\rightarrow \infty$,
$$\lim_{k \rightarrow \infty} \langle \bar f_b, D(cone(\tilde \tau \times \{k\})) \rangle =\lim_{k \rightarrow \infty} \langle \bar f_b', D(cone(\tilde \tau \times \{k\})) \rangle = 0.$$
Applying this to each $\tau$, we can conclude that
   $$\lim_{k \rightarrow \infty} \langle \bar f_b,D_*(cone(\widetilde{\partial c_k})) \rangle =\lim_{k \rightarrow \infty} \langle \bar f_b', D_*(cone(\widetilde{\partial c_k}))\rangle =0.$$
In the end, it follows that $$\lim_{k\rightarrow \infty} \langle \widehat D^*\bar f_b - \widehat D^*\bar f_b' , \widehat c_k \rangle=0.$$ 
As we mentioned, the value on the left hand side above does not depend on $\widehat c_k$. Thus we can conclude that $\langle \widehat D^*\bar f_b - \widehat D^*\bar f_b' , \widehat c_k \rangle=0$. 
This implies that $\langle \widehat D^*\bar f_b, \widehat c \rangle = \langle \widehat D^*\bar f_b', \widehat c\rangle$ for any fundamental cycle $\widehat c$ of $\widehat M$, which completes the proof.
\end{proof}

Combining Proposition \ref{lem:indepwb} with (\ref{eqn:A}), Proposition \ref{prop:indepwb} immediately follows.

\begin{proposition}
The definitions of {\bf D3} and {\bf D4} are equivalent.
\end{proposition}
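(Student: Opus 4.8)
The goal is to prove $\vol_3(\rho)=\vol_4(\rho)$, and the plan is simply to thread together the reformulations assembled earlier in this section. I would begin from the identity of Lemma \ref{lem:indep}: for every $\alpha\in[M]^{\ell^1}_\mathrm{Lip}$ and every continuous bounded volume class $\omega_b$,
$$\langle \rho^*_b(\omega_b),\alpha\rangle=\langle (c\circ(i^*_b)^{-1}\circ\rho^*_b)(\omega_b),[\overline M,\partial\overline M]\rangle .$$
Since the right-hand side does not depend on $\alpha$, the infimum over $\alpha$ in the definition of {\bf D3} is immaterial, and $\vol_3(\rho)=\inf_{\omega_b}\langle (c\circ(i^*_b)^{-1}\circ\rho^*_b)(\omega_b),[\overline M,\partial\overline M]\rangle$, the infimum now ranging over continuous bounded volume classes only.

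Next I would invoke the identity (\ref{eqn:A}), which rewrites each term of this infimum as $\langle \widehat D^*[\bar f_b],[\widehat M]\rangle$, where $\bar f_b$ is the boundary extension of a representative of $\omega_b$ furnished by Lemma \ref{lem:extend} and $\widehat D$ is any $\rho$-equivariant map carrying $\widetilde M$ into $\mathcal X$. By Proposition \ref{lem:indepwb} this number is the same for every continuous bounded volume class; hence the infimum defining $\vol_3(\rho)$ is attained by every $\omega_b$ and
$$\vol_3(\rho)=\langle \widehat D^*[\bar f_b],[\widehat M]\rangle$$
for an arbitrary such $\omega_b$.

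It then remains to recognize this common value as $\vol_4(\rho)$. For that I would take the particular continuous bounded volume class $\omega_b=[\Theta]_{c,b}$ --- legitimate because $c([\Theta]_{c,b})=\omega_{\mathcal X}$ --- together with the representative $f_b=\Theta$. Its extension supplied by Lemma \ref{lem:extend} is nothing but $\overline\Theta$: in rank one the volume of a geodesic simplex depends continuously on its (possibly ideal) vertices, so $\lim_{t\to\infty}\Theta(c_0(t),\dots,c_n(t))=\overline\Theta(\bar x_0,\dots,\bar x_n)$. Consequently $\langle \widehat D^*[\bar f_b],[\widehat M]\rangle=\langle \widehat D^*[\overline\Theta],[\widehat M]\rangle=\vol_4(\rho)$ by the definition of {\bf D4}, and combining the three displays gives $\vol_3(\rho)=\vol_4(\rho)$.

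All the substance has already been spent in advance: the analytic heart lies in Lemma \ref{lem:extend} (extending bounded volume cocycles to $\overline{\mathcal X}$) and in Proposition \ref{lem:indepwb} (independence of the chosen bounded volume class, proved via the exponential shrinking of the cone simplices $D(\tilde\tau\times\{k\})$ together with uniform continuity on $\mathcal X^n\times\{\xi\}$), so what is left here is pure bookkeeping. The only point asking for a little vigilance is that the commutative diagram of Proposition \ref{prop:1} --- hence identity (\ref{eqn:A}) --- goes through with $\overline\Theta$ replaced by a general $\bar f_b$; this is immediate, since that diagram used nothing about $\overline\Theta$ beyond its being a bounded alternating $G$-invariant cocycle on $\overline{\mathcal X}^{n+1}$, a property every $\bar f_b$ shares.
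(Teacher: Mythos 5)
Your proposal is correct and follows essentially the same route as the paper: Lemma \ref{lem:indep} removes the dependence on $\alpha$, identity (\ref{eqn:A}) rewrites each term as $\langle \widehat D^*[\bar f_b],[\widehat M]\rangle$, Proposition \ref{lem:indepwb} makes the infimum over $\omega_b$ constant, and specializing to $[\Theta]_{c,b}$ with extension $\overline\Theta$ yields $\vol_4(\rho)$. Your added remarks (that (\ref{eqn:A}) only uses that $\bar f_b$ is a bounded alternating $G$-invariant cocycle, and that the extension of $\Theta$ is $\overline\Theta$) are points the paper leaves implicit, but they do not change the argument.
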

\begin{proof}
By Lemma \ref{lem:indep} and Proposition \ref{prop:1}, we have 
\begin{eqnarray*}
\vol_3(\rho) &=& \inf \{ \langle \rho^*_b(\omega_b),\alpha \rangle \ | \ c(\omega_b)=\omega_{\X} \text{ and } \alpha\in [M]_\mathrm{Lip}^{\ell^1} \} \\
&=& \inf \{ \langle (c\circ (i^*_b)^{-1} \circ \rho^*_b)(\omega_b), [\overline M,\partial \overline M] \rangle \ | \ c(\omega_b)=\omega_\X \} \\
&=& \inf \{  \langle \widehat D^* [\bar f_b], [\widehat M] \rangle  \ | \ c(\omega_b)=\omega_\X \} \\
&=&  \langle \widehat D^* [\overline \Theta], [\widehat M] \rangle  \\
&=& \vol_4(\rho),
\end{eqnarray*}
which completes the proof.
\end{proof}


\begin{thebibliography}{10}

\bibitem{BCG95} G. Besson, G. Courtois and S. Gallot, \emph{Entropies et rigidit\'{e}s des espaces localement sym\'{e}triques de cour bure strictement n\'{e}gative}, Geom. Funct. Anal. 5 (1995), no. 5, 731--799.

\bibitem{BCG99} G. Besson, G. Courtois and S. Gallot, \emph{Lemme de Schwarz r\'{e}el et applications g\'{e}om\'{e}triques}, Acta Math. 183 (1999), 145--169.

\bibitem{BCG07} G. Besson, G. Courtois and S. Gallot, \emph{In\'{e}galit\'{e}s de Milnor--Wood g\'{e}om\'{e}triques}, Comment. Math. Helv. 82 (2007), 753--803.

\bibitem{BW} A. Borel and N. Wallach, \emph{Continuous cohomology, discrete subgroups, and representations of reductive groups}, second edition, Mathematical Surveys and Monographs, vol. 67, American Mathematical Society, Providence, RI, 2000.

\bibitem{BBI} M. Bucher, M. Burger and A. Iozzi, \emph{A dual interpretation of the Gromov-Thurston proof of Mostow rigidity and volume rigidity for representations of hyperbolic lattices}, Trends in harmonic analysis, 47--76, Springer INdAM Ser., 3, Springer, Milan, 2013.

\bibitem{BBIPP} M. Bucher, M. Burger, R. Frigerio, A. Iozzi, C. Pagliantini and M. B. Pozzetti, \emph{Isometric properties of relative bounded cohomology}, arXiv:1205.1022 [math.GT].

\bibitem{BI01} M. Burger and A. Iozzi, \emph{A measurable Cartan theorem and applications to deformation rigidity in complex hyperbolic geometry}. Pure Appl. Math. Q. 4 (2008), no. 1, Special Issue: In honor of Grigory Margulis. Part 2, 181--202.

\bibitem{BIW10} M. Burger, A. Iozzi and A. Wienhard, \emph{Surface group representations with maximal Toledo invariant}, Annals of Math. 172 (2010), 517--566.

\bibitem{BM96} M. Burger and S. Mozes, \emph{CAT$(-1)$-spaces, divergence groups and their commensurators}, J. of Amer. Math. Soc. 9 (1996), no. 1, 57--93.

\bibitem{Co88} K. Corlette, \emph{Flat $G$-bundles with canonical metrics}, J. Diff. Geom. 28 (1988), no. 3, 361--382.

\bibitem{Du99} N. M. Dunfield, \emph{Cyclic surgery, degrees of maps of character curves, and volume rigidity for hyperbolic manifolds}, Invent. Math. 136 (1999), no. 3, 623-–657.

\bibitem{Du76} J. L. Dupont, \emph{Simplicial de Rham cohomology and characteristic classes of flat bundles}, Topology 15 (1976), no. 3, 233--245.

\bibitem{Du} A. M. DuPre, \emph{Real Borel cohomology of locally compact groups}, Trans. Amer. Math. Soc. 134 (1968), 239--260. 

\bibitem{Fr04} S. Francaviglia, \emph{Hyperbolic volume of representations of fundamental groups of cusped $3$-manifolds}, Int. Math. Res. Not. (2004), no. 9, 425--459. 

\bibitem{FK06} S. Francaviglia and B. Klaff, \emph{Maximal volume representations are Fuchsian}, Geom. Dedicata 117 (2006), 111--124. 

\bibitem{GM87} W. M. Goldman and J. J. Millson, \emph{Local rigidity of discrete groups acting on complex hyperbolic space}, Invent. Math. 88 (1987), no. 3, 495--520.

\bibitem{Go92} W. M. Goldman, \emph{Characteristic classes and representations of discrete subgroups of Lie groups}, Bull. Amer. Math. Soc. 6 (1982), no. 1, 91--94.

\bibitem{Gro82} M. Gromov, \emph{Volume and bounded cohomology}, Inst. Hautes \'{E}tudes Sci. Publ. Math. 56, 5--99 (1982).

\bibitem{Gu80} A. Guichardet, \emph{Cohomologie des groupes topologiques et des alg\`{e}bres de Lie}, Textes Math\'{e}matiques, 2. CEDIC, Paris, 1980.

\bibitem{Hatcher} A. Hatcher, \emph{Algebraic topology}, Cambridge University Press, Cambridge, 2002.

\bibitem{IY82} H. Inoue and K. Yano, \emph{The Gromov invariant of negatively curved manifolds}, Topology, 21 (1982), 83--89.

\bibitem{Iva85} N. V. Ivanov, \emph{Foundations of the theory of bounded cohomology}, Zap. Nauchn. Semin. Leningr. Otd. Mat. Inst. Steklova (LOMI) 143, 69–109 (1985), 177--178 (Russian, with English summary).

\bibitem{KK} S. Kim and T. Kuessner, \emph{Simplicial volume of compact manifolds with amenable boundary}, arXiv:1205.1375 [math.GT].

\bibitem{KK14} S. KIm and I. Kim, \emph{Volume invariant and maximal representations of discrete subgroups of Lie groups}, Math. Z. 276 (2014), no. 3-4, 1189--1213.

\bibitem{KM08} V. Koziarz and J. Maubon, \emph{Harmonic maps and representations of non-uniform lattices $\mathrm{PU}(m,1)$}, Ann. Inst. Fourier, Grenoble 58 (2008), no. 2, 507--558.

\bibitem{Mo01} N. Monod, \emph{Continuous Bounded Cohomology of Locally Compact Groups}, Lecture Notes in Mathematics, vol. 1758, Springer-Verlag, Berlin, 2001.

\bibitem{Loh07} C. L\"{o}h, \emph{$\ell^1$-Homology and Simplicial Volume}, PhD thesis, WWU M\"{u}nster, 2007, http://nbn-resolving.de/urn:urn:de:hbz:6-37549578216.

\bibitem{Mo79} C. C. Moore, \emph{Amenable subgroups of semisimple groups and proximal flows}, Israel J. Math. 34 (1979), no. 1-2, 121--138 (1980).

\bibitem{Mu66} J. R. Munkres, \emph{Elementary Differential Topology}, Princeton University Press, Princeton, NJ, 1966.

\bibitem{NY99} W. D. Neumann and J. Yang,\emph{Bloch invariants of hyperbolic $3$-manifolds}, Duke Math. J. 96 (1999), no. 1, 29--59.

\bibitem{To79} D. Toledo, \emph{Harmonic maps from surfaces to certain Kaehler manifolds}, Math. Scand. 45 (1979), no. 1, 13--26.

\end{thebibliography}
\end{document}